\theoremstyle{theorem}
\newtheorem{theorem}{Theorem}
\newtheorem{lemma}{Lemma}
\newtheorem{corollary}{Corollary}
\newtheorem{proposition}{Proposition}
\newtheorem{conjecture}{Conjecture}
\newtheorem*{result}{Result}
\theoremstyle{definition}
\newtheorem{definition}{Definition}
\newtheorem*{remark}{Remark}
\newtheorem{example}{Example}
\begin{document}
	
	\title{\textbf{Influence of different kind of thin sets in the theory of convergence}}
	\author{Manoranjan Singha, Ujjal Kumar Hom}
	
	\date{}
	\maketitle
	{\let\thefootnote\relax\footnotetext{{MSC: Primary 40A35, Secondary 54A20.\\Department of Mathematics, University of North Bengal, Raja Rammohunpur, Darjeeling-734013, West Bengal, India.\\ Email address: manoranjan.math@nbu.ac.in, rs\_ujjal@nbu.ac.in}}}
	\begin{abstract}
		A class of subsets designated as very thin subsets of natural numbers has been studied and seen that theory of convergence may be rediscovered if very thin sets are given to play main role instead of thin or finite sets which removes some drawback of statistical convergence. While developing the theory of very thin sets, concepts of super thin, very very thin and super super thin sets are evolved spontaneously.
		
	\end{abstract}

	\noindent

	\section{Introduction}
	Let's begin with the well-known definition of asymptotic density \textbf{\cite{u12}} of subsets of set of natural numbers $\omega$. For any $A\subset \omega$, $|A|$ denotes the cardinality of $A$ and $A(n)=|\{m\in \omega:m\in A\cap\{1,2,...,n\} \}|$. The numbers
	\begin{center}
		$\underline{d}(A) = \displaystyle{\liminf_{n\rightarrow\infty}} \frac{A(n)}{n}$ and $\overline{d}(A) = \displaystyle{\limsup_{n\rightarrow\infty}} \frac{A(n)}{n}$
	\end{center}
are called the lower and upper asymptotic density of A, respectively. If $\underline{d}(A) = \overline{d}(A)$, then $d(A)=\overline{d}(A)$ is called asymptotic density of $A$. As in \textbf{\cite{u13}}, $A$ is called thin subset of $\omega$ if $d(A)=0$ otherwise $A$ is nonthin. 

The concept of statistical convergence \textbf{\cite{u4}} of a real sequences, is a generalization of usual convergence, is based on notion of asymptotic density where thin subsets of $\omega$ play an important role. A sequence $(x_n)_{n\in \omega}$ of real numbers is statistically convergent to a real number $a$ if for any $\epsilon>0$ the set $\{n\in \omega : \mid x_n-a \mid\geqslant\epsilon\}$ is thin.

Consider a real sequence $(x_n)_{n\in \omega}$ where
\begin{center}
$x_n=\begin{cases}-1,& \text{if }n=2^k+j,k\in \omega \text{ and } 0\leq j\leq k-1\\ 1, & \text{otherwise } \end{cases} $
\end{center}
In this sequence -1 is repeated at a stretch $k$ times from $(2^k)^{th}$ term to $(2^k+(k-1))^{th}$ term for every natural number $k$. As $k$ increases	towards infinity, number of repetition of -1 at a stretch is also increases towards infinity.\\
Let's consider another real sequence $(y_n)_{n\in \omega}$ where
\[y_n=\begin{cases}-1,& \text{if }n=2^k,k\in \omega\\ 1, & \text{otherwise } \end{cases} \]
In this sequence -1 appearing only at every ${2^k}^{th}$ place for every $k\in\omega$. As $k$ increases towards infinity gap between two consecutive appearance of -1 is also tending to infinity. In the existing literature both $(x_n)_{n\in \omega}$ and $(y_n)_{n\in \omega}$ are statistically convergent. This article distinguishes these kinds of sequences in regard of convergence.   
		\section{Definitions of different kind of thin subsets of $\omega$}\label{sec1}
			Let $A=\{n_{1}<n_{2}<n_{3}<...\}$ be an infinite subset of $\omega$ such that $b_k=(n_{k+1}-n_k)>m$ eventually for any $m\in \omega$. Then  $\displaystyle{\lim_{k \to \infty}} \frac{\frac{1}{b_1}+\frac{1}{b_2}+...+\frac{1}{b_k}}{k} = 0$. Now
			\begin{center}
				$(b_1+b_2+...+b_k)(\frac{1}{b_1}+\frac{1}{b_2}+...+\frac{1}{b_k})\geqslant k^{2} \Rightarrow\frac{\frac{1}{b_1}+\frac{1}{b_2}+...+\frac{1}{b_k}}{k}\geqslant\frac{k}{b_1+b_2+...+b_k}\geqslant\frac{k}{n_{k+1}}.$
			\end{center}
			Hence $\displaystyle{\lim_{k \to \infty}} \frac{k+1}{n_{k+1}} = 0$ and so $A$ is thin.
		\begin{example}\label{example1}
			Let $A_1=\{2^k:k\in \omega\}$ and $A_2=\{2^k+1:k\in \omega\}$. Let $\mathbb{A}=A_1\cup A_2=\{n_{1}<n_{2}<n_{3}<...\}$. Then $\mathbb{A}$ is thin but $(n_{k+1}-n_{k})=1$ frequently.
		\end{example}
		Suppose $A\subset \omega$ and $M\in \omega$. Define \newline
			$(A)_M$=$\{1\}$ $\cup$ $\{n$: $n>1$ and there exist $n$ consecutive elements of $A$ such that difference between any two consecutive among them is less than or equal to $M$$\}$.
	\begin{definition}
		A subset $A$ of $\omega$ is super thin if there exists a sub-collection $\{A_n: n\in \omega\}$ of finite subsets of $\omega$ such that $\max(A\backslash A_n)_n=1$ for all $n$.
	\end{definition}
	\begin{proposition}
		A subset $A$ of $\omega$ is super thin if and only if $A$ is finite or $\displaystyle{\lim_{n \to \infty}} (n_{k+1}-n_{k}) = \infty$ if $A=\{n_{1}<n_{2}<n_{3}<...\}$.
	\end{proposition}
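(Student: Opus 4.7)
The plan is to first translate the condition $\max(B)_M=1$ into a concrete statement about the gaps of $B$. By definition $(B)_M$ always contains $1$, and for $n>1$ it contains $n$ iff $B$ has $n$ consecutive elements with all pairwise gaps bounded by $M$. Hence $\max(B)_M=1$ is equivalent to saying that no two consecutive elements of $B$ are within distance $M$, i.e.\ \emph{every} gap in $B$ strictly exceeds $M$. I would record this reformulation as the first observation, since both directions of the proposition turn on it.

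Next I would dispose of the trivial case where $A$ is finite: take $A_n=A$ so that $A\setminus A_n=\emptyset$ and $(\emptyset)_n=\{1\}$, so $A$ is super thin. This shows finite sets satisfy both sides.

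For the forward direction, assume $A=\{n_1<n_2<\dots\}$ is infinite and super thin, witnessed by $\{A_n:n\in\omega\}$. Fix $n$; since $A_n$ is finite, there is $N_n$ with $A_n\subseteq\{1,\dots,N_n\}$. For any $k$ with $n_k>N_n$, both $n_k$ and $n_{k+1}$ lie in $A\setminus A_n$ and are consecutive there (removing elements $\leq N_n$ cannot insert anything between them). By the observation in the first paragraph, $n_{k+1}-n_k>n$. Thus for every $n$ the inequality $n_{k+1}-n_k>n$ holds eventually, which is exactly $\lim_{k\to\infty}(n_{k+1}-n_k)=\infty$.

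For the converse, suppose $\lim_{k\to\infty}(n_{k+1}-n_k)=\infty$. Given $n$, pick $K=K(n)$ such that $n_{k+1}-n_k>n$ for all $k\geq K$, and set $A_n=\{n_1,\dots,n_K\}$. Then $A\setminus A_n=\{n_{K+1},n_{K+2},\dots\}$ and every consecutive gap in this tail exceeds $n$, so by the reformulation $\max(A\setminus A_n)_n=1$. This exhibits the required witnesses, proving $A$ is super thin. The main conceptual step is the first paragraph; once $\max(B)_M=1$ is correctly decoded as ``every gap $>M$'', both implications reduce to the elementary fact that a finite subset of $\omega$ is bounded.
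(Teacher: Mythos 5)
Your proof is correct; in fact the paper states this proposition without giving any proof, so there is nothing to compare against. The key step is exactly the one you isolate: decoding $\max(B)_M=1$ as ``every consecutive gap in $B$ strictly exceeds $M$.'' Both directions then follow from boundedness of finite sets as you argue, and your handling of the trivial finite case (taking $A_n=A$) is the expected one. One small remark: the proposition as printed writes $\lim_{n\to\infty}(n_{k+1}-n_k)$, which is a typo for $\lim_{k\to\infty}$; you read it correctly.
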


     Every super thin subset of $\omega$ is thin and from Example \ref{example1} we see that thin set may not be super thin but finite union of super thin sets may not be super thin. However there exists a sub-collection $\{B_n: n\in \omega\}$ of finite subsets of $\omega$ such that $\max(\mathbb{A}\backslash B_n)_n\leqslant 2$ for all $n$ and using this fact from Example we have set the next definition.
	\begin{definition}
		A subset $A$ of $\omega$ is very thin if there exist a sub-collection $\{A_n: n\in \omega\}$ of finite subsets of $\omega$ and $M\in \omega$ such that $\max(A\backslash A_n)_n\leqslant M$ for all $n$.
	\end{definition}
    \begin{proposition}\label{Prop1}
    	A subset $A$ of $\omega$ is very thin if and only if $A$ is finite or $A$ can be written as follows:\newline $(V1)$ $A=\displaystyle{\bigcup_{k\in\omega}}\mathcal{A}_k$ where $1\leqslant |\mathcal{A}_k|\leqslant \mathcal{M} $ for some $\mathcal{M}\in\omega$ for all $k\in\omega$ ,\newline $(V2)$ $\min(\mathcal{A}_{k+1}) - \max(\mathcal{A}_k) >0$ for all $k\in \omega$,\newline $(V3)$ $\displaystyle{\lim_{k \to \infty}}(\min(\mathcal{A}_{k+1}) - \max(\mathcal{A}_k))=\infty$.
    \end{proposition}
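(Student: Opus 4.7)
The plan is to prove the two directions separately. For the \emph{if} direction, assuming $A = \bigcup_k \mathcal{A}_k$ satisfies (V1)--(V3), I would fix $n \in \omega$, use (V3) to locate $K = K(n)$ beyond which every between-block gap $\min(\mathcal{A}_{k+1}) - \max(\mathcal{A}_k)$ exceeds $n$, and set $A_n := \bigcup_{k < K} \mathcal{A}_k$, which is finite by (V1). Any chain of consecutive elements of $A \setminus A_n$ whose consecutive differences are all $\leq n$ cannot cross a block boundary past $K$, so it lies inside a single $\mathcal{A}_k$ and has length at most $\mathcal{M}$. This gives $\max(A \setminus A_n)_n \leq \mathcal{M}$ uniformly in $n$, so $A$ is very thin with constant $M = \mathcal{M}$.

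For the \emph{only if} direction, assume $A$ is infinite and very thin with witnesses $\{A_n\}$ and constant $M$. Enumerate $A = \{n_1 < n_2 < n_3 < \ldots\}$ with gaps $d_i := n_{i+1} - n_i$. The first step is to convert the hypothesis into the following quantitative form: for every $m \in \omega$, all sufficiently late windows $(d_i, d_{i+1}, \ldots, d_{i+M-1})$ contain at least one $d_j > m$. Once this is available, I would group the gaps into disjoint windows $G_k := \{d_{(k-1)M+1}, \ldots, d_{kM}\}$, choose an index $p_k$ in the range of $G_k$ achieving $\max G_k$, set $p_0 := 0$, and define $\mathcal{A}_k := \{n_{p_{k-1}+1}, \ldots, n_{p_k}\}$. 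These blocks partition $A$; each has cardinality at most $2M-1$ (giving (V1) with $\mathcal{M} = 2M-1$); they are strictly ordered (giving (V2)); and the between-block gap equals $d_{p_k} = \max G_k$, which diverges by the quantitative statement above (giving (V3)).

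The one delicate point I expect is the quantitative unpacking step: arguing that because $A_m$ is finite, any $M+1$ consecutive elements of $A$ past a certain index are also $M+1$ consecutive elements of $A \setminus A_m$, so that $\max(A \setminus A_m)_m \leq M$ applies directly to every sufficiently late window of $A$ and forces a large gap there. This is more a careful reading of the definition of $(\cdot)_m$ than a real obstacle; once it is in hand, the partition construction is a mechanical greedy choice and no further estimates are required.
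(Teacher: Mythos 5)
Your proof is correct, and in the forward direction it takes a genuinely different (and cleaner) route than the paper. The \emph{if} direction is essentially the paper's: both of you remove a finite initial union of blocks so that all surviving between-block gaps exceed $n$, and observe that a run with consecutive differences $\leq n$ must then stay inside a single block, giving $\max(A\setminus A_n)_n\leq\mathcal M$. In the \emph{only if} direction, however, the paper works in two stages: it first passes to a sparser subcollection of witness sets $A_n$ with $\max A_{n+1}-\max A_n>2^n$ and an auxiliary sequence $(t_n)$, chops $A$ into chunks $B_k=A\cap\{\max A_k+1,\dots,\max A_{k+1}\}$, and then subdivides each $B_k$ into runs of length $\leq M$ with internal gaps $>t_k$; the growth of gaps across adjacent chunks $B_k,B_{k+1}$ is left implicit. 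You instead read off a single quantitative consequence of the definition --- every window of $M$ consecutive gaps $d_i,\dots,d_{i+M-1}$ past a finite prefix contains a gap $>m$ --- then cut $A$ at the index realizing $\max G_k$ in each disjoint window $G_k$ of $M$ gaps. This is a one-pass greedy construction, it makes the cardinality bound $\mathcal M=2M-1$ explicit, and it manifestly controls \emph{every} between-block gap (it equals $\max G_k$), so (V2) and (V3) fall out without having to worry about chunk boundaries. What your version buys is transparency and a uniform treatment of all cuts; what the paper's buys is nothing that I can see beyond matching its earlier notation. The "delicate point" you flagged (that $M+1$ late consecutive elements of $A$ are also consecutive in $A\setminus A_m$) is indeed just a reading of the definition and you handle it correctly.
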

\begin{proof}
	For any finite subset $A$ of $\omega$, $\max(A)_n\leqslant |A|$ for all $n$. So finite subsets of $\omega$ are very thin.\\
	Let $A$ be an infinite very thin subset of $\omega$. Then we can get a sub-collection $\{A_n: n\in \omega\}$ of non-empty finite subsets of $\omega$ and $M\in \omega$ such that $(\max(A_{n+1})-\max(A_n))>2^n$ and $\max(A\backslash A_n)_{t_n}\leqslant M$ for all $n$ where $\{t_{1}<t_{2}<t_{3}<...\}$ is an infinite subset of $\omega$. If $|B_k=A\cap \{\max(A_k)+1,...,\max(A_{k+1})\}|>1$ then $B_k$ can be decomposed as \begin{center}
		$B_k=\displaystyle{\bigcup_{i=1}^{j_k}}B_{ki}$
	\end{center}such that $1\leqslant |B_{ki}|\leqslant M$, $1\leqslant i\leqslant j_k$ and
	\begin{center}
		$\min(B_{k(i+1)}) - \max(B_{ki}) >t_k$, $1\leqslant i\leqslant j_k-1$.
	\end{center}
Thus $A$ can be expressed in such way that $A$ satisfies $(V1)$, $(V2)$ and $(V3)$ where $\mathcal{M}=\max\{M,\max{A_1}\}$.\\
Conversely, let $A$ be an infinite subset of $\omega$ so that $A$ satisfies $(V1)$, $(V2)$ and $(V3)$. Then there exists an infinite subset $\{n_1<n_2<n_3<...\}$ of $\omega$ such that 
\begin{center}
	$(\min(\mathcal{A}_{m+1}) - \max(\mathcal{A}_m))>k$ for all $m>n_k$.
\end{center}
Let $A_k=\{1,...,\min(\mathcal{A}_{n_k+1})\}$, $k\geqslant 1$. Then $\max(A\backslash A_k)_k\leqslant \mathcal{M}$ for all $k$.
\end{proof}
	\begin{example}
		Let $A=\displaystyle{\bigcup_{k\in \omega}A_k}$ where $A_k=\{2^k,2^k+1,...,2^k+k\}$. Then $d_n(A)\leqslant \frac{(k+1)(k+2)}{2^k}$ for $2^k\leqslant n<2^{k+1}$. So, $A$ is thin. If $A=\displaystyle{\bigcup_{k\in \omega}B_k}$ where $1\leqslant |B_k|\leqslant M $ for all $k\in\omega$ for some $M\in\omega$ and $\min(B_{k+1}) - \max(B_k) >0$ for all $k\in \omega$ then there exists a subset $\{n_1<n_2<n_3<...\}$ of $\omega$ such that $\displaystyle{\lim_{k \to \infty}}(\min(B_{n_{k}+1}) - \max(B_{n_k}))=1$. Therefore, $A$ is not very thin.\\
		
		The Prime number theorem implies that set of prime numbers is thin (see in \textbf{\cite{u9}}). Now we will see whether set of prime numbers is very thin or not. As in \textbf{\cite{u8}}, a set $\mathcal{D}=\{d_1,...,d_k\}$ consisting of non-negative integers is called admissible set if for any prime $p$, there is an integer $b_{p}$ such that $b_p\not\equiv$ d(mod p) for all $d\in \mathcal{D}$.
		The statement of Prime k-tuple conjecture is given as in \textbf{\cite{u8}}:
		\begin{conjecture}[\textbf{Prime k-tuple conjecture}]
			Let $\mathcal{D}=\{d_1,...,d_k\}$ be an admissible set. Then there are infinitely many integers $h$ such that $\{h+d_1,...,h+d_{k}\}$ is a set of primes.
		\end{conjecture}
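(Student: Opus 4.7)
The final statement is the Prime $k$-tuple conjecture of Hardy and Littlewood, which is famously open; my plan is therefore not to prove it but to describe a principled attempt and locate the step where it breaks down.

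The first move would be the Hardy--Littlewood circle method. I would set $S(\alpha)=\sum_{n\le N}\Lambda(n)e(n\alpha)$ and study $\int_0^1 \prod_{j=1}^{k} S(\alpha)\,e(-d_j\alpha)\,d\alpha$, which is a weighted count of $h\le N$ such that every $h+d_j$ is prime. A standard Farey dissection splits $[0,1]$ into major and minor arcs. On the major arcs the Siegel--Walfisz theorem gives a main term of order $\mathfrak{S}(\mathcal{D})\,N/(\log N)^{k}$, where $\mathfrak{S}(\mathcal{D})$ is the singular series; admissibility of $\mathcal{D}$ is exactly the condition that forces $\mathfrak{S}(\mathcal{D})>0$, which would yield the desired infinitude of $h$.

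The sole obstacle is the minor arc estimate. One needs cancellation in $\int_{\mathfrak{m}}\prod_{j=1}^{k} S(\alpha)\,e(-d_j\alpha)\,d\alpha$ of strictly smaller order than the main term, and a $k$-fold product of $S(\alpha)$ demands information about primes beyond what is currently available; even $k=2$ (twin primes) is out of reach. The strongest unconditional substitute is the Goldston--Pintz--Yıldırım sieve as refined by Maynard and Tao, which produces $m(k)\to\infty$ primes inside the tuple for a positive proportion of shifts $h$, together with Zhang-style results that secure $m=2$ with explicit bounds on the admissible diameter.

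The hard part, and the reason I would not expect to close the argument, is Selberg's parity barrier: linear sieves cannot by themselves distinguish integers with an even number of prime factors from those with an odd number, so they cannot simultaneously force each $h+d_j$ to be prime. Breaking parity would require genuinely new input, such as zero-density estimates for products of $L$-functions or an algebro-geometric mechanism not yet in the analytic toolkit. Anticipating this, I expect the paper to use the conjecture only as a working hypothesis, for instance to decide whether the set of primes is very thin in the sense of Proposition~\ref{Prop1}.
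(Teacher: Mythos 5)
You are right that the paper offers no proof here: the Prime $k$-tuple conjecture is stated (citing Maynard) as an open hypothesis, and your anticipation is exactly what happens -- it is invoked in the subsequent ``Result'' to argue that the set of primes is not very thin. Your sketch of the circle-method obstacle and the parity barrier is a fair account of why the statement remains out of reach, and it matches the paper's implicit treatment of the conjecture as an unproved assumption rather than a theorem.
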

	\begin{result}
		Set of prime numbers is not very thin.
	\end{result}
\begin{proof}
		Let $P$ be the set of all primes and let $P=\displaystyle{\bigcup_{k\in\omega}}A_k$ where $1\leqslant |A_k|\leqslant M $ for all $k\in\omega$ for some $M\in\omega$ and  $\min(A_{k+1}) - \max(A_k) >0$ for all $k\in \omega$. Then there exists an admissible set $\{d_1,...d_{M+1}\}$. \\Let $G=\max\{d_{i+1}-d_i:i=1,...,M\}$. By k-tuple conjecture there should be infinitely many ($M$+1)-tuple of primes $(p+d_1,...,p+d_{M+1})$. Therefore $(\min(A_{k+1}) - \max(A_k))\leq G$ for infinitely many $k\in \omega$. So $P$ is not very thin.
		\end{proof}
		Now the question is whether finite union of very thin sets is very thin or not. We will get this answer in the next section. Before that we have more two definitions of namely super super thin and very very thin which are generalized from super thin and very thin 
		respectively.
		
		\begin{definition}
			A subset $A$ of $\omega$ is super super thin if $A$ is finite or $\displaystyle{\sum_{k=1}^{\infty}}\frac{1}{(n_{k+1}-n_k)}<\infty$ if $A=\{n_{1}<n_{2}<n_{3}<...\}$.
		\end{definition}
		\begin{definition}
			A subset $A$ of $\omega$ is very very thin if $A$ is finite or $A$ can be written as follows:\newline (i) $A=\displaystyle{\bigcup_{k\in\omega}}\mathcal{A}_k$ where $1\leqslant|\mathcal{A}_k|\leqslant \mathcal{M} $ for some $\mathcal{M}\in\omega$ for all $k\in\omega$,\newline (ii) $\min(\mathcal{A}_{k+1}) - \max(\mathcal{A}_k) >0$ for all $k\in \omega$,\newline (iii)$\displaystyle{\sum_{k=1}^{\infty}}\frac{1}{(\min(\mathcal{A}_{k+1}) - \max(\mathcal{A}_k))}<\infty$.
		\end{definition}
		\begin{example}
			$\displaystyle{\bigcup_{k\in \omega}\{2^k,2^k+k\}}$ is super thin and very very thin but not super super thin.
		\end{example}

		\begin{example}
			Let $X=\{b_1,b_2,b_3,...\}$ and $Y=\{b_1,b_1+1,b_2,b_3,b_3+1,b_4,...\}$ where $b_k=1+...+k$. Let $X=\displaystyle{\bigcup_{k\in\omega}}X_k$ where $1\leqslant |X_k|\leqslant M $ for all $k\in\omega$ for some $M\in\omega$. Then $(\min(X_{k+1}) - \max(X_k))\leq (b_{l+1}-b_l)=l+1$ for some $l\leq kM
			$. Hence for all $k\geqslant 1$,
			\begin{center}
			 $\frac{1}{k+1}\leq \frac{M}{\min(X_{k+1}) - \max(X_k)}$.
			 \end{center}
			  Therefore $X$ is not very very thin but super thin. Since $X\subset Y$, $Y$ is not very very thin but very thin.
		\end{example}
		
		\section{Characterization of very thin sets and its relation with thin and uniformly thin sets}\label{sec2}

		\begin{lemma}\label{lemma1}
			Union of two super thin sets is very thin.
		\end{lemma}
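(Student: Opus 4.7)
The plan is to invoke Proposition \ref{Prop1} by exhibiting a decomposition $A\cup B=\bigcup_k \mathcal{C}_k$ into blocks of uniformly bounded size with inter-block gaps tending to infinity. If $A$ or $B$ is finite the result is immediate: every infinite super thin set is itself very thin (take singleton blocks $\{a_k\}$), and appending finitely many points to a very thin set only modifies a finite prefix of the block decomposition, preserving both the uniform size bound and the $\to\infty$ gap condition. I therefore assume $A=\{a_1<a_2<\cdots\}$ and $B=\{b_1<b_2<\cdots\}$ are both infinite and enumerate $C:=A\cup B=\{c_1<c_2<\cdots\}$ with $d_i:=c_{i+1}-c_i$.

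The decisive input is $d_i+d_{i+1}=c_{i+2}-c_i\to\infty$ as $i\to\infty$. By pigeonhole, among $c_i,c_{i+1},c_{i+2}$ at least two, say $c_p=a_r$ and $c_q=a_s$ with $p<q$ and $r<s$, belong to $A$. Any element $a_t\in A$ with $r<t<s$ would lie strictly between $c_p$ and $c_q$ in $C$, and the only candidate for such an intermediate index among $\{i,i+1,i+2\}$ is $j=i+1$ (which occurs only when $p=i$ and $q=i+2$); hence $s-r\leq 2$. Thus $c_{i+2}-c_i\geq a_s-a_r$, and since $a_r\geq c_i\to\infty$ forces $r\to\infty$, the super thinness of $A$ yields $a_s-a_r\to\infty$.

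Since $\max(d_i,d_{i+1})\geq\tfrac12(d_i+d_{i+1})\to\infty$, I form the blocks greedily: scanning $C$ left to right, when positioned at $c_i$ take the pair $\{c_i,c_{i+1}\}$ if $d_i<d_{i+1}$ and otherwise the singleton $\{c_i\}$, then advance past the block just formed. Each block has size at most $2$, so (V1) holds with $\mathcal{M}=2$; (V2) is automatic from the ordering of $C$; and in either branch of the rule the inter-block gap $\min(\mathcal{C}_{k+1})-\max(\mathcal{C}_k)$ works out to $\max(d_{i_k},d_{i_k+1})$, where $i_k$ is the starting index of $\mathcal{C}_k$. Since $i_k\to\infty$, this gap tends to infinity, giving (V3), and Proposition \ref{Prop1} concludes that $A\cup B$ is very thin. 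The principal obstacle is the pigeonhole/consecutiveness step establishing $c_{i+2}-c_i\to\infty$; the greedy verification afterward is routine bookkeeping.
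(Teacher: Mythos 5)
Your proof is correct, and it takes a genuinely different route from the paper's. The paper decomposes $S\cup T$ by grouping each $s_i\in S$ with (at most) one neighboring element of $T$ on each side, chosen from the half-interval toward $s_i$, and leaving leftover elements of $T$ as singletons; the resulting blocks have size at most $3$, and the gap bound comes from super thinness of $S$ and $T$ applied separately to the two cases (gap between consecutive $T$-elements, or half of a gap between consecutive $S$-elements). You instead symmetrize: merge into $C=\{c_1<c_2<\cdots\}$, prove the intrinsic lemma that $c_{i+2}-c_i\to\infty$ via pigeonhole (any three consecutive elements of $C$ contain two elements of the same super thin set with index difference at most $2$), and then pair greedily so that each inter-block gap equals $\max(d_{i_k},d_{i_k+1})\geq\tfrac12(c_{i_k+2}-c_{i_k})$. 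Your version is arguably cleaner in that it isolates a reusable structural fact about the merged sequence and never has to keep track of which of $S,T$ a given element came from; the paper's version is more hands-on and immediately exhibits the attaching map. One small presentational nit: when you say ``at least two $\ldots$ belong to $A$,'' you should say ``without loss of generality to $A$,'' since for different $i$ the pair may fall in $B$ instead, and the conclusion $c_{i+2}-c_i\to\infty$ uses super thinness of both; this is clearly what you intend, but it is worth stating.
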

		\begin{proof}
			Suppose $S=\{s_1<s_2<s_3<...\}$ and $T=\{t_1<t_2<t_3<...\}$ are super thin subsets of $\omega$. For each $i\in \omega$ we construct a set containing $s_i$ and the smallest number $t$ of T such that $s_i\leqslant t\leqslant \frac{s_i+s_{i+1}}{2}$ if such a number $t$ exists and  the largest number $t$ of $T$ such that $\frac{s_{i-1}+s_{i}}{2}\leqslant t\leqslant s_i$ if such a number $t$ exists. Leave all remaining elements of $T$ as singleton. Then $S\cup T$ can be decomposed into the sets $\mathcal{A}_k$ such that $S\cup T=\displaystyle{\bigcup_{k\in\omega}}\mathcal{A}_k$ where $1\leqslant |\mathcal{A}_k|\leqslant 3 $ for all $k\in\omega$ and $(\mathcal{A}_k)_{k\in \omega}$ satisfies $V(2)$ and $V(3)$ given in Proposition \ref{Prop1}.
		\end{proof}
	\begin{corollary}\label{co1}
		Union of two super super thin sets is very very thin.
	\end{corollary}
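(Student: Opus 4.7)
The plan is to reuse the partition $\{\mathcal{A}_k\}$ of $S\cup T$ constructed in the proof of Lemma~\ref{lemma1} and verify that, under the stronger hypothesis that $S=\{s_1<s_2<\dots\}$ and $T=\{t_1<t_2<\dots\}$ are super super thin, the gaps $g_k:=\min(\mathcal{A}_{k+1})-\max(\mathcal{A}_k)$ satisfy the summability condition (iii) of a very very thin set. Conditions (i) and (ii) are already delivered by that construction, so the only work is to prove $\sum_k 1/g_k<\infty$ starting from $\sum_i 1/(s_{i+1}-s_i)<\infty$ and $\sum_j 1/(t_{j+1}-t_j)<\infty$.

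The key step will be the following classification of each new gap $g_k$: either (a) $g_k=t_{j+1}-t_j$ for two consecutive elements of $T$, or (b) $g_k\ge \tfrac{1}{2}(s_{i+1}-s_i)$ for some $i$, and moreover each $s$-gap $s_{i+1}-s_i$ is used in case (b) by at most one index $k$. Granting this,
\[
\sum_k \frac{1}{g_k}\;\le\;\sum_j \frac{1}{t_{j+1}-t_j}\;+\;2\sum_i \frac{1}{s_{i+1}-s_i}\;<\;\infty,
\]
which is exactly what is needed.

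To establish the classification I would fix consecutive $s_i,s_{i+1}$ and look at the new gaps whose endpoints lie in the slot $(s_i,s_{i+1})$. By the rule used in Lemma~\ref{lemma1}, the ``right neighbour'' of $s_i$ (if any) is the smallest $t\in(s_i,\tfrac{s_i+s_{i+1}}{2}]$ and the ``left neighbour'' of $s_{i+1}$ (if any) is the largest $t\in[\tfrac{s_i+s_{i+1}}{2},s_{i+1})$. When both neighbours exist, every new gap inside $(s_i,s_{i+1})$ is between two consecutive elements of $T$, i.e.\ case (a). When exactly one neighbour is missing, all $t$'s in $(s_i,s_{i+1})$ lie in the opposite half, so the single new gap crossing the midpoint $\tfrac{s_i+s_{i+1}}{2}$ is at least $\tfrac{s_{i+1}-s_i}{2}$, while the rest are again of type (a). When both neighbours are missing, $(s_i,s_{i+1})$ contains no $t$ at all and $g_k=s_{i+1}-s_i$ is the unique new gap in the slot. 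In every scenario, at most one new gap is of type (b), and it is pinned to the single $s$-gap $s_{i+1}-s_i$, giving the required injectivity.

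The main obstacle is noticing that the naive bound $\sum_k 1/g_k\le \sum_l 1/(u_{l+1}-u_l)$ over consecutive elements $\{u_l\}$ of $S\cup T$ actually \emph{fails}: super super thinness is not preserved by unions, since an $s$-point with a $t$-point at distance $1$ already contributes a non-summable term. The absorption rule in Lemma~\ref{lemma1} is precisely the device that destroys those dangerous size-one gaps by swallowing them into an $s_i$-group, and once this is observed the halving factor in case (b) is just the elementary midpoint geometry described above.
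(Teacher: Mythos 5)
Your argument is correct and is exactly the verification the paper leaves implicit: Corollary~\ref{co1} is stated as a consequence of Lemma~\ref{lemma1} with no separate proof, the intent being that the same partition into blocks of size at most $3$ works once one checks that summability (rather than mere divergence to infinity) of the gaps is inherited. Your case split --- each new gap is either a genuine $T$-gap or crosses the midpoint of a unique $s$-slot and is therefore at least half an $s$-gap --- is a clean and correct way to carry out that check, and your closing observation (that the absorption rule is what neutralizes the non-summable size-one gaps a naive sorted-union bound would pick up) is precisely the point of the construction.
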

		\begin{lemma}\label{lemma2}
			Union of super thin and very thin subsets of $\omega$ is very thin.
		\end{lemma}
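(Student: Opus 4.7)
The plan is to prove the result directly from the original definition of very thin sets through $\max(A \setminus A_n)_n$, avoiding the block characterization of Proposition~\ref{Prop1}. Let $\{A^S_n\}_{n \in \omega}$ witness that $S$ is super thin, so $\max(S \setminus A^S_n)_n = 1$, and let $\{A^T_n\}_{n \in \omega}$ together with a bound $\mathcal{M} \in \omega$ witness that $T$ is very thin, so $\max(T \setminus A^T_n)_n \leqslant \mathcal{M}$. Two routine reductions set the stage: first replace $S$ by $S \setminus T$, which is still super thin and yields the same union $S \cup T$, making $S$ and $T$ disjoint; second, intersect $A^S_n$ with $S$ and $A^T_n$ with $T$ (neither intersection alters the value of $\max(\cdot \setminus A_n)_n$), so we may assume $A^S_n \subseteq S$ and $A^T_n \subseteq T$.

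Set $M^* := \mathcal{M} + 1$ and $B_n := A^S_{M^* n} \cup A^T_{M^* n}$; each $B_n$ is finite, so the task reduces to verifying $\max((S \cup T) \setminus B_n)_n \leqslant M^*$ for every $n$. I argue by contradiction: suppose $u_1 < u_2 < \cdots < u_{M^* + 1}$ are $M^* + 1$ consecutive elements of $(S \cup T) \setminus B_n$ with each consecutive gap at most $n$, so the total span is $u_{M^* + 1} - u_1 \leqslant M^* n$. Let $a = |\{u_i\} \cap S|$ and $b = |\{u_i\} \cap T|$, so $a + b = M^* + 1$.

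Because $\max(S \setminus A^S_{M^* n})_{M^* n} = 1$, consecutive elements of $S \setminus A^S_{M^* n}$ are separated by more than $M^* n$, and hence every pair in $S \setminus A^S_{M^* n}$ is; but the $a$ points in the $S$-part all lie in an interval of length $\leqslant M^* n$, forcing $a \leqslant 1$. For the $T$-part, the disjointness reduction gives $(S \cup T) \setminus B_n = (S \setminus A^S_{M^* n}) \cup (T \setminus A^T_{M^* n})$ as a disjoint union, so between any two of the $b$ $T$-points there is no element of $T \setminus A^T_{M^* n}$; they are therefore consecutive in $T \setminus A^T_{M^* n}$, with consecutive differences bounded by the window length $M^* n$. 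The very-thin bound for $T$ at scale $M^* n$ then forces $b \leqslant \mathcal{M}$, and combining, $a + b \leqslant 1 + \mathcal{M} = M^*$, contradicting $a + b = M^* + 1$.

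The delicate step is the consecutiveness claim for the $T$-part: without the normalizations $A^S_n \subseteq S$, $A^T_n \subseteq T$ and without making $S$ and $T$ disjoint, elements of $T$ hidden inside $A^S_{M^* n}$ could interrupt the chain of $T$-points and invalidate the application of the very-thin bound on $T$. Once those normalizations are in place, the remaining content is pigeonhole on $a + b$ with the scale chosen as $M^* n$ so that both the super-thin and very-thin bounds apply simultaneously.
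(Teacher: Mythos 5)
Your proof is correct, and it takes a genuinely different route from the paper's. The paper first translates both hypotheses through the block characterization of Proposition~\ref{Prop1}: it writes the very thin set as $\bigcup_k A_k$ with blocks of size at most $M$ and gaps tending to infinity, then for each block $A_i$ carefully builds an enlarged set $B_i$ by absorbing nearby points of the super thin set, splits each $B_i$ again into sub-blocks, and verifies that the resulting decomposition of $S\cup T$ satisfies $(V1)$--$(V3)$ with block size at most $2M+1$. Your argument instead stays entirely inside the original $\max(\cdot\setminus A_n)_n$ formulation: after the two harmless normalizations (disjointness, witnesses contained in the sets), you exhibit the explicit witness $B_n=A^S_{M^*n}\cup A^T_{M^*n}$ with $M^*=\mathcal{M}+1$, and the bound $\max((S\cup T)\setminus B_n)_n\leq M^*$ falls out of a short pigeonhole on the $S$-part (at most one point, since points of $S\setminus A^S_{M^*n}$ are $>M^*n$ apart while the window has length $\leq M^*n$) and the $T$-part (at most $\mathcal{M}$ points, since they are consecutive in $T\setminus A^T_{M^*n}$ with gaps $\leq M^*n$). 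The choice of the rescaled index $M^*n$ so that both the super-thin and very-thin witnesses engage at the same scale is the one idea that makes this work, and the careful observation that disjointness is needed for the $T$-points to remain consecutive in $T\setminus A^T_{M^*n}$ is exactly right. What your approach buys is economy: no appeal to Proposition~\ref{Prop1}, no block surgery, and a quantitatively cleaner output bound $\mathcal{M}+1$. What the paper's approach buys is that the explicit block decomposition of $S\cup T$ is immediately in the form needed for the downstream results (Theorem~\ref{thverythin} and its corollaries), whereas your proof would require one extra application of Proposition~\ref{Prop1} to recover that form.
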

		\begin{proof}
			Let S be a very thin and $T=\{t_1<t_2<t_3<...\}$ be a super thin subsets of $\omega$. Let $S=\displaystyle{\bigcup_{k\in\omega}}A_k$ where $1\leqslant |A_k|\leqslant M $ for all $k\in\omega$ for some $M\in\omega$ and  $\min(A_{k+1}) - \max(A_k) >0$ for all $k\in \omega$ with $\displaystyle{\lim_{k \to \infty}}(\min(A_{k+1}) - \max(A_k))=\infty$.\newline
			 For every $i\in \omega$, we construct a set $B_i$ containing $A_i$ and the smallest number $t$ of $T$ such that $\max(A_i)\leqslant t\leqslant \frac{\max(A_i)+\min(A_{i+1})}{2}$ if such a number $t$ exists and  the largest number $t$ of $T$ such that $\frac{\min(A_{i-1})+\max(A_{i})}{2}\leqslant t\leqslant \min(A_i)$ if such a number $t$ exists and elements $t$ of $T$ such that $\min(A_i)<t<\max(A_i)$. Now each $B_i$ can be decomposed as \begin{center}
			 	$B_i=\displaystyle{\bigcup_{k=1}^{j(i)}}B_{ik}$
			 	\end{center}such that 
		 	\begin{center}
			 	$\min(B_{i(k+1)}) - \max(B_{ik}) >0$, $1\leqslant k\leqslant j(i)-1$,\\$\max(B_{ik})$ $\in T$ for $1\leqslant k< j(i)$ and $\min(B_{ik})$ $\in T$ for $1< k\leqslant j(i)$
			 \end{center}
		 and there is no $r\in \omega$ so that $t_r,t_{r+1}\in B_{ik}$ if there do not exist any $s\in A_i$ satisfying $t_r<s<t_{r+1}$. Leave all remaining elements of $T$ as singleton, one can decompose $S\cup T$ into the sets $\mathcal{A}_k$ such that $S\cup T=\displaystyle{\bigcup_{k\in\omega}}\mathcal{A}_k$ where $1\leqslant |\mathcal{A}_k|\leqslant 2M+1 $ for all $k\in\omega$ and $(\mathcal{A}_k)_{k\in \omega}$ satisfies $V(2)$ and $V(3)$ given in Proposition \ref{Prop1}. 
		\end{proof}
	\begin{corollary}\label{co2}
		Union of super super thin and very very thin subsets of $\omega$ is very very thin.
	\end{corollary}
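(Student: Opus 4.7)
The plan is to apply exactly the construction from Lemma~\ref{lemma2} to the given super super thin $T=\{t_1<t_2<\ldots\}$ and the given very very thin $S=\bigcup_k A_k$ with $1\leqslant|A_k|\leqslant M$. Lemma~\ref{lemma2} already yields a decomposition $S\cup T=\bigcup_k \mathcal{A}_k$ satisfying $|\mathcal{A}_k|\leqslant 2M+1$ and $\min(\mathcal{A}_{k+1})-\max(\mathcal{A}_k)>0$, so conditions (i) and (ii) of the definition of very very thin sets hold automatically. Only condition (iii), the summability $\sum_k 1/(\min(\mathcal{A}_{k+1})-\max(\mathcal{A}_k))<\infty$, needs fresh verification.

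To estimate this sum I would classify every new gap according to its two boundary elements. Gaps come in two types. \emph{Type A}: both $\max(\mathcal{A}_k)$ and $\min(\mathcal{A}_{k+1})$ lie in $T$. These are the gaps internal to each $B_i$ (produced by the subdivision into $B_{i1},\ldots,B_{ij(i)}$, whose endpoints are forced to lie in $T$ by Lemma~\ref{lemma2}) together with any gap adjacent to a singleton $T$-element left outside every $B_i$. \emph{Type B}: the gap straddles from the $A_i$-region to the $A_{i+1}$-region with at least one endpoint in $S$. Using the halving rule in the definition of $B_i$---the right-absorbed $T$-element lies at most halfway to $\min(A_{i+1})$ and the left-absorbed one at least halfway from $\max(A_i)$---one checks that every Type B gap satisfies $\min(\mathcal{A}_{k+1})-\max(\mathcal{A}_k)\geqslant \tfrac12(\min(A_{i+1})-\max(A_i))$, and that at most one Type B gap arises from each consecutive pair $(A_i,A_{i+1})$.

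These bounds deliver the result. Type B gaps contribute at most $2\sum_i 1/(\min(A_{i+1})-\max(A_i))$, which is finite by the very very thinness of $S$. For Type A gaps, the Lemma~\ref{lemma2} constraints $\max(B_{ik})\in T$, $\min(B_{i(k+1)})\in T$, and the rule that no $B_{ik}$ contains two consecutive $T$-elements without an $A_i$-element between them, allow me to charge each Type A gap to a distinct consecutive pair $(t_r,t_{r+1})$ in $T$ with $\min(\mathcal{A}_{k+1})-\max(\mathcal{A}_k)\geqslant t_{r+1}-t_r$. Hence the Type A contribution is bounded by $\sum_r 1/(t_{r+1}-t_r)<\infty$, finite by the super super thinness of $T$. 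Adding the two contributions yields condition (iii), and therefore $S\cup T$ is very very thin.

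The main obstacle I anticipate is the injectivity claim for Type A gaps: ensuring that each consecutive pair $(t_r,t_{r+1})$ is used as witness for at most boundedly many new gaps. All the structural work of Lemma~\ref{lemma2}---absorbing at most one $T$-element at each side of $A_i$ and forbidding consecutive unseparated $T$-elements within a single sub-block---goes precisely into making this charging well-defined. Once that bookkeeping is confirmed the rest is just a term-by-term comparison of reciprocals, which is routine.
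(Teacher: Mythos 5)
Your plan is the intended one: the corollary carries no separate proof in the paper precisely because it is the Lemma~\ref{lemma2} construction with the summability check (iii) verified in place of the limit condition (V3), which is exactly what you supply. Your Type A/Type B classification is sound — Type A gaps have both endpoints in $T$ and the internal structure of the $B_{ik}$ decomposition (internal boundaries in $T$, consecutive $T$-elements without an $A_i$-element between them forced into different blocks, at most one $T$-element absorbed on each side of $A_i$) forces each Type A gap to equal $t_{r+1}-t_r$ for a distinct $r$, while a short case analysis on whether the left/right side absorptions occur confirms both your halving bound and the ``at most one Type B gap per consecutive pair $(A_i,A_{i+1})$'' claim (the case with neither side absorbed and intervening $T$-singletons cannot actually occur, since it would require a $T$-element simultaneously strictly below and strictly above the midpoint). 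So the comparison sums close as you say, and the argument is essentially the paper's implicit one, with the bookkeeping made explicit.
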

		\begin{theorem}\label{thverythin}
			A subset A of $\omega$ is very thin if and only if A can be expressed as a finite union of super thin subsets of $\omega$.
		\end{theorem}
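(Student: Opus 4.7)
The plan is to prove both implications, relying on the two preceding lemmas for one direction and on the structural characterization from Proposition \ref{Prop1} for the other.

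For the ``if'' direction, suppose $A = S_1 \cup S_2 \cup \cdots \cup S_n$ where each $S_i$ is super thin. Since every super thin set is itself very thin (taking $M = 1$ in the definition), the base case $n = 1$ is immediate. For $n = 2$, Lemma \ref{lemma1} gives that $S_1 \cup S_2$ is very thin. Proceeding by induction on $n$, assume $S_1 \cup \cdots \cup S_{n-1}$ is very thin; then Lemma \ref{lemma2} applied to this set together with the super thin set $S_n$ shows that $A$ is very thin. This handles one direction with almost no work.

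For the ``only if'' direction, assume $A$ is very thin. If $A$ is finite, it is itself super thin and we are done. Otherwise invoke Proposition \ref{Prop1} to write $A = \bigcup_{k \in \omega} \mathcal{A}_k$ with $1 \leqslant |\mathcal{A}_k| \leqslant \mathcal{M}$, $\min(\mathcal{A}_{k+1}) - \max(\mathcal{A}_k) > 0$, and $\min(\mathcal{A}_{k+1}) - \max(\mathcal{A}_k) \to \infty$. For each $j \in \{1, 2, \ldots, \mathcal{M}\}$, enumerate $\mathcal{A}_k$ in increasing order and form the ``$j$th column''
\[
S_j = \{ a_{k,j} : k \in \omega,\ |\mathcal{A}_k| \geqslant j \},
\]
where $a_{k,j}$ denotes the $j$th element of $\mathcal{A}_k$. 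Then $A = \bigcup_{j=1}^{\mathcal{M}} S_j$ by construction.

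It remains to show each $S_j$ is super thin. Take two consecutive elements of $S_j$, coming from blocks $\mathcal{A}_{k_1}$ and $\mathcal{A}_{k_2}$ with $k_2 > k_1$. Since both columns skip only those blocks of size less than $j$, we have
\[
a_{k_2,j} - a_{k_1,j} \;\geqslant\; \min(\mathcal{A}_{k_1 + 1}) - \max(\mathcal{A}_{k_1}),
\]
which tends to infinity as $k_1 \to \infty$ by (V3). Hence consecutive gaps in $S_j$ diverge, so by Proposition 1 in Section \ref{sec1} each $S_j$ is super thin, and $A$ is a finite ($\mathcal{M}$-fold) union of super thin sets.

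The only mildly subtle point is the bookkeeping in the column construction, ensuring that even when the block sizes $|\mathcal{A}_k|$ fluctuate below $\mathcal{M}$, the gaps between consecutive elements of each $S_j$ still inherit the divergence from (V3); this is controlled by the inequality above because whichever blocks $S_j$ skips, the gap is bounded below by a gap between adjacent blocks. Beyond that, the argument is a direct application of the two lemmas and Proposition \ref{Prop1}.
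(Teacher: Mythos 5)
Your proof is correct and takes essentially the same approach as the paper: the ``if'' direction spells out the induction on Lemmas \ref{lemma1} and \ref{lemma2} that the paper invokes tersely, and the ``only if'' direction uses the same column decomposition of the block structure from Proposition \ref{Prop1}. The one cosmetic difference is that the paper pads each block to length $\mathcal{M}$ by repeating its largest element (so $B_i=\{a_{ki}:k\in\omega\}$ with $a_{k1}\leqslant\cdots\leqslant a_{k\mathcal{M}}$), whereas you instead restrict the $j$th column to blocks of size at least $j$; both yield the same super thin sets, and your explicit check that the gap lower bound survives the block-skipping is a sound handling of the same point.
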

		\begin{proof}
			Suppose A is a very thin subset of $\omega$ such that\newline (i) $A=\displaystyle{\bigcup_{k\in\omega}}A_k$ where $1\leqslant |A_k|\leqslant M $ for all $k\in\omega$ for some $M\in\omega$,\newline (ii) $\min(A_{k+1}) - \max(A_k) >0$ for all $k\in \omega$,\newline (iii)$\displaystyle{\lim_{k \to \infty}}(\min(A_{k+1}) - \max(A_k))=\infty$.\newline
			Let $A_k=\{a_{k1}\leqslant a_{k2}\leqslant ...\leqslant a_{kM}\}, k\in \omega$. Define $B_i=\{a_{ki}:k\in \omega\}, 1\leqslant i\leqslant M$. Then $A=\displaystyle{\bigcup_{i=1}^{M}} B_i$ and
			\begin{align*}
				(a_{(k+1)i} - a_{ki})&\geqslant (a_{k+1)1} - a_{kM}) = (\min(A_{k+1}) - \max(A_k)) 
			\end{align*}
			Therefore $B_i$ is super thin, $1\leqslant i\leqslant M$.\newline Converse part follows directly from Lemma 1 and Lemma 2.
		\end{proof}
		
		\begin{corollary}
			Finite union of very thin subsets of $\omega$ is very thin.
		\end{corollary}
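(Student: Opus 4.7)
The plan is to reduce this immediately to Theorem \ref{thverythin}, which characterizes very thin sets as precisely those sets expressible as a finite union of super thin sets. Once that equivalence is in hand, finite unions of very thin sets are automatically very thin, because a finite union of finite unions of super thin sets is again a finite union of super thin sets.

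More concretely, I would argue as follows. Let $A^{(1)}, A^{(2)}, \ldots, A^{(n)}$ be very thin subsets of $\omega$. By the forward direction of Theorem \ref{thverythin}, for each $i \in \{1, \ldots, n\}$ there exist $m_i \in \omega$ and super thin sets $B^{(i)}_1, \ldots, B^{(i)}_{m_i}$ with
\[
A^{(i)} = \bigcup_{j=1}^{m_i} B^{(i)}_j .
\]
Then
\[
\bigcup_{i=1}^{n} A^{(i)} = \bigcup_{i=1}^{n} \bigcup_{j=1}^{m_i} B^{(i)}_j ,
\]
which is a finite union of super thin sets. Applying the converse direction of Theorem \ref{thverythin} yields that $\bigcup_{i=1}^{n} A^{(i)}$ is very thin.

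There is essentially no obstacle here, since all the substantive work is already carried out in Lemma \ref{lemma1}, Lemma \ref{lemma2}, and Theorem \ref{thverythin}: the characterization of very thin sets as finite unions of super thin sets converts the corollary into a trivial closure-under-finite-unions statement. The only thing to note is that the bound $M$ controlling the block sizes in the resulting very thin decomposition may grow with $n$, but this is harmless because the definition of very thin allows any $M \in \omega$.
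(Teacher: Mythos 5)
Your proposal is correct and is exactly the intended argument: the paper presents this as an immediate corollary of Theorem \ref{thverythin}, and reducing a finite union of very thin sets to a finite union of super thin sets via that characterization is precisely the route the paper takes.
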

		\begin{corollary}\label{cor}
			Very thin subsets of $\omega$ are thin.
		\end{corollary}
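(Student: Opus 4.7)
The plan is to combine Theorem \ref{thverythin} with the calculation that opens Section \ref{sec1}. Specifically, the opening paragraph of Section \ref{sec1} establishes, via the Cauchy--Schwarz-type inequality $(b_1+\cdots+b_k)(\tfrac{1}{b_1}+\cdots+\tfrac{1}{b_k})\geqslant k^2$, that any infinite set $A=\{n_1<n_2<\cdots\}$ whose gap sequence $b_k=n_{k+1}-n_k$ tends to infinity satisfies $\lim_{k\to\infty}\frac{k+1}{n_{k+1}}=0$, i.e.\ $A$ is thin. By the Proposition characterizing super thin sets, this is exactly the statement that every super thin set is thin (the finite case being trivial).

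Next I would invoke the routine fact that the family of thin subsets of $\omega$ is closed under finite unions, which follows immediately from subadditivity of the upper asymptotic density: for thin $C,D\subset \omega$ one has $\overline{d}(C\cup D)\leqslant \overline{d}(C)+\overline{d}(D)=0$, hence $d(C\cup D)=0$, and then induction handles any finite union.

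With these two ingredients in hand, the corollary is immediate from Theorem \ref{thverythin}: if $A$ is very thin, then $A=B_1\cup\cdots\cup B_M$ for some super thin sets $B_1,\dots,B_M$; each $B_i$ is thin by the first step, and hence $A$ is thin by the second step.

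I do not expect any real obstacle here; the corollary is essentially a packaging of Theorem \ref{thverythin} together with the introductory gap-to-density computation. The only point that warrants a line of justification is the closure of thin sets under finite unions, which is standard but perhaps worth stating explicitly since the paper has not yet invoked subadditivity of upper density in this form.
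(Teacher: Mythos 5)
Your proof is correct and matches the route the paper intends: the corollary is stated immediately after Theorem~\ref{thverythin}, and the paper has already observed in Section~\ref{sec1}--\ref{sec2} that super thin sets are thin via the gap computation; combining this with closure of thin sets under finite unions (subadditivity of $\overline{d}$) gives the result exactly as you describe.
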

		
		\end{example}
		\begin{corollary}
			Any very thin set can be expressed as a finite intersection of thin but non very thin sets.
		\end{corollary}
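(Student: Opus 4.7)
My plan is to prove the slightly stronger statement that any very thin $A\subset\omega$ can be written as the intersection of just two thin, non-very-thin subsets of $\omega$. The main reduction is: if I can produce two disjoint subsets $D_1,D_2$ of $\omega\setminus A$, each thin but not very thin, then the sets $C_i=A\cup D_i$ will do the job. Indeed $C_1\cap C_2=A\cup(D_1\cap D_2)=A$; each $C_i$ is thin as the union of two thin sets (using Corollary \ref{cor}); and each $C_i$ fails to be very thin because $D_i\subset C_i$ is not very thin, while Proposition \ref{Prop1} makes clear that every subset of a very thin set is again very thin (just intersect the given decomposition with the subset and discard empty pieces). So everything reduces to constructing such a pair $D_1,D_2$.

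For the construction I would exploit the room available inside the gaps of $A$. The case of finite $A$ being completely analogous (but simpler), assume $A$ is infinite and invoke Proposition \ref{Prop1} to write $A=\bigcup_{k\in\omega}\mathcal{A}_k$ with uniformly bounded block sizes and gaps $g_k=\min(\mathcal{A}_{k+1})-\max(\mathcal{A}_k)\to\infty$. I would select a sparse subsequence of indices $k_1<k_2<\cdots$ along which both $g_{k_i}>2i+1$ and $\max(\mathcal{A}_{k_i})>2^i$. Inside the $k_i$-th gap I place two disjoint blocks of $i$ consecutive integers, the first contributing to $D_1$ and the second to $D_2$, separated by one skipped integer to keep $D_1\cap D_2=\emptyset$. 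The resulting $D_1,D_2$ sit inside $\omega\setminus A$ and are disjoint by construction.

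Finally I verify the required properties of $D_1$ and $D_2$. Each of them contains arbitrarily long runs of consecutive integers, so in any purported very-thin decomposition $\bigcup_k \mathcal{D}_k$ with $|\mathcal{D}_k|\leqslant\mathcal{M}$, a run of length $\mathcal{M}+1$ must straddle two adjacent pieces $\mathcal{D}_k,\mathcal{D}_{k+1}$ whose separation is then at most $1$; having such a configuration infinitely often contradicts the gap-going-to-infinity requirement of Proposition \ref{Prop1}, so neither $D_j$ is very thin. For thinness, the contribution of $D_j$ to the counting function up to $n$ is bounded by $1+2+\cdots+I(n)$ where $I(n)$ is the largest index $i$ with the $i$-th block contained in $\{1,\dots,n\}$; the choice $\max(\mathcal{A}_{k_i})>2^i$ forces $I(n)=O(\log n)$, so the counting function is $O((\log n)^2)=o(n)$, yielding density zero. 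The only mildly delicate step is this density estimate, but the freedom to thin out the chosen subsequence of gap-indices as much as one likes removes any real obstacle.
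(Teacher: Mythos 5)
Your proof is correct, and the core device is the same as the paper's: both arguments manufacture thin-but-not-very-thin sets by planting runs of consecutive integers of length tending to infinity at exponentially sparse positions, then unite them with the given set so that the intersection recovers it. The organization differs, though. The paper first treats a super thin $S=\{t_1<t_2<\dots\}$, anchoring the two auxiliary runs at $t_{n_k}$ and at $t_{n_k+1}$, sets $A'=A\cup S$, $B'=B\cup S$, and then appeals to Theorem~\ref{thverythin} to pass to general very thin sets; that last step is left implicit (one has to distribute the finite union over the pairwise intersections and observe that each resulting union is thin and contains a non-very-thin part). Your argument sidesteps the super-thin detour by working directly with the block decomposition of Proposition~\ref{Prop1} for a general very thin $A$, and by placing $D_1$ and $D_2$ disjointly inside the gaps of $A$ you obtain $C_1\cap C_2=A$ immediately, whereas the paper's $A'\cap B'=S$ silently requires checking $A\cap B\subset S$. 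You also make explicit a fact the paper uses tacitly (that a subset of a very thin set is very thin, so $C_i\supset D_i$ cannot be very thin), which is the cleaner way to justify non-very-thinness of the factors. One trivial point worth noting: in the density estimate the partial block straddling $n$ should be included, but this only changes the bound by a single extra term and the $O((\log n)^2)=o(n)$ conclusion stands.
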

		\begin{proof}
			Suppose $S=\{t_1<t_2<t_3<...\}$ is a super thin subset of $\omega$. Let $n_1\in\omega$ such that $t_{n_1+1}-t_{n_1}>2$. Then we get a strictly increasing sequence of natural numbers $(n_k)_{k\geqslant 1}$ such that $t_{n_k}>2t_{n_{k-1}}$ and $t_{n_k+1}-t_{n_k}>2k$ for $k\geq2$.\newline
			Define $A=\displaystyle{\bigcup_{k\geqslant 1}\{t_{n_k},...,t_{n_k}+k\}}$ and $B=\displaystyle{\bigcup_{k\geqslant 1}\{t_{n_k+1}-k,...,t_{n_k+1}\}}$.\\Then A and B both are thin but not very thin. Let $A'=A\cup S$ and $B'=B\cup S$. Since $S$ is super thin, $A'$ and $B'$ are thin but not very thin with $A'\cap B'=S$. Hence any super thin set can be expressed as intersection of two thin but non very thin sets. From Theorem~\ref{thverythin} it follows that any very thin set can be expressed as a finite intersection of thin but non very thin sets.
		\end{proof}
		\begin{theorem}
			A subset A of $\omega$ is very very thin if and only if A can be expressed as a finite union of super super thin 
			subsets of $\omega$.
		\end{theorem}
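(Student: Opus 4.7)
The plan is to mirror the proof of Theorem~\ref{thverythin} almost verbatim, substituting the convergence condition $\sum_k \frac{1}{\min(A_{k+1})-\max(A_k)}<\infty$ for the limit condition $\lim_k(\min(A_{k+1})-\max(A_k))=\infty$, and invoking Corollaries~\ref{co1} and \ref{co2} in place of Lemmas~\ref{lemma1} and \ref{lemma2}. Both directions then carry over with only cosmetic changes.

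For the forward implication, I would assume $A$ is very very thin and write $A=\bigcup_{k\in\omega}A_k$ with $1\leq |A_k|\leq M$, $\min(A_{k+1})-\max(A_k)>0$ for all $k$, and $\sum_{k}\frac{1}{\min(A_{k+1})-\max(A_k)}<\infty$. Writing $A_k=\{a_{k1}\leq a_{k2}\leq\cdots\leq a_{kM}\}$ (padding with repetitions when $|A_k|<M$) and setting $B_i=\{a_{ki}:k\in\omega\}$ for $1\leq i\leq M$ gives $A=\bigcup_{i=1}^{M}B_i$. As in the earlier proof, the estimate $a_{(k+1)i}-a_{ki}\geq \min(A_{k+1})-\max(A_k)$ holds, so by comparison $\sum_{k}\frac{1}{a_{(k+1)i}-a_{ki}}\leq \sum_{k}\frac{1}{\min(A_{k+1})-\max(A_k)}<\infty$, and each $B_i$ is therefore super super thin.

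For the converse, if $A=\bigcup_{i=1}^{n}S_i$ with each $S_i$ super super thin, I would induct on $n$. The base case $n=2$ is exactly Corollary~\ref{co1}. For the inductive step, if $T=\bigcup_{i=1}^{n-1}S_i$ is very very thin by the inductive hypothesis, then $A=T\cup S_n$ is the union of a very very thin and a super super thin set, hence very very thin by Corollary~\ref{co2}.

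The only delicate point is the padding step in the forward direction when some $|A_k|<M$: one must repeat values within $A_k$ in a way that leaves each sequence $(a_{ki})_{k\in\omega}$ strictly increasing in $k$ for every fixed $i$. This is automatic from condition (ii) of the very very thin definition, since the blocks $A_k$ are strictly separated, so the repeated values within a single block cannot interfere with the strict increase across blocks. No genuinely new ideas are needed beyond those in the proof of Theorem~\ref{thverythin}; the essential content is simply that the summability condition is preserved under the same decomposition and reassembly used there.
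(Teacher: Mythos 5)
Your proof is correct and follows essentially the same route as the paper: the forward direction reuses the decomposition $B_i=\{a_{ki}:k\in\omega\}$ from Theorem~\ref{thverythin} together with the bound $a_{(k+1)i}-a_{ki}\geq\min(A_{k+1})-\max(A_k)$ to transfer the summability condition, and the converse invokes Corollaries~\ref{co1} and~\ref{co2}. The paper states this tersely; your version just makes the comparison test and the induction explicit.
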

	\begin{proof}
		If $A$ is very very thin then each $B_i$ defined in Theorem \ref{thverythin} become super super thin.\\
		Corollary \ref{co1} and Corollary \ref{co2} together implies that finite union of super super thin sets is very very thin.
	\end{proof}
\begin{corollary}
	Finite union of very very thin subsets of $\omega$ is very very thin.
\end{corollary}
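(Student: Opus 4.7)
The plan is to derive this corollary directly from the characterization theorem immediately above it, which states that a subset of $\omega$ is very very thin if and only if it can be expressed as a finite union of super super thin subsets. Since being very very thin has been recast as a combinatorial decomposability condition, the corollary reduces to a routine bookkeeping argument, entirely parallel to the earlier corollary that a finite union of very thin sets is very thin.

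In more detail, I would start with finitely many very very thin subsets $A_1, \ldots, A_n$ of $\omega$. Applying the preceding theorem to each $A_i$, I obtain a decomposition
\[
A_i = B_{i,1} \cup B_{i,2} \cup \cdots \cup B_{i,k_i},
\]
where every $B_{i,j}$ is super super thin. Taking the union over $i$ then exhibits
\[
\bigcup_{i=1}^{n} A_i \;=\; \bigcup_{i=1}^{n}\bigcup_{j=1}^{k_i} B_{i,j}
\]
as a union, indexed by the finite set $\{(i,j) : 1 \le i \le n,\ 1 \le j \le k_i\}$, of super super thin sets. Invoking the characterization theorem a second time, now in the reverse direction, yields that $\bigcup_{i=1}^n A_i$ is very very thin.

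There is essentially no obstacle here; the entire mathematical content has already been absorbed by the characterization theorem, which itself rests on Corollaries \ref{co1} and \ref{co2}. The only thing worth noting is the finiteness of the combined index set $\{(i,j)\}$, which is immediate since $n$ is finite and each $k_i$ is finite. In this sense the corollary is a pure bookkeeping consequence, mirroring line by line the argument used for the finite union of very thin sets.
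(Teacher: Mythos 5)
Your argument is exactly the intended one: the paper leaves this corollary without an explicit proof because it follows immediately from the preceding characterization theorem, precisely by the double application you describe (decompose each very very thin set into finitely many super super thin sets, then reassemble the finite union and invoke the theorem in the reverse direction). Correct and essentially identical to the paper's (implicit) reasoning.
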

	
		From Corollary \ref{cor} it follows that very thin sets are thin. Since zero uniform density\textbf{(\cite{u1,u11})} subsets of $\omega$ are thin, it is natural to arise a question that whether very thin sets have uniform density zero or zero uniform density sets are very thin.
		Let $B\subset \omega$. For $h\geqslant 0$ and $k\geqslant 1$, let $A(h+1,h+k)=|\{n\in B:h+1\leqslant n\leqslant h+k\}|$. The existence of the following limits are proved in \textbf{\cite{u11}}:
		\begin{center}
			$ \underline{u}(B) = \displaystyle{\lim_{k\to \infty}\frac{1}{k}\liminf_{h\to \infty}A(h+1,h+k)}$ , $ \overline{u}(B) = \displaystyle{\lim_{k\to \infty}\frac{1}{k}\limsup_{h\to \infty}A(h+1,h+k)}$ 
		\end{center}
		$\underline{u}(B)$ and $\overline{u}(B)$ are called lower and upper uniform density of B respectively. If $\underline{u}(B)=\overline{u}(B)$ then $u(B)=\overline{u}(B)=\underline{u}(B)$ is called uniform density of $B$. From now on call B is uniformly thin if $u(B)=0$. Since $\underline{u}(B)\leqslant \underline{d}(B)\leqslant \overline{d}(B)\leqslant \overline{u}(B)$, $B$ is thin if $B$ is uniformly thin but the converse is not true (see in \textbf{\cite{u11}}).\newline
		Now we will see the relation between very thin and uniformly thin subsets of $\omega$.
		\begin{theorem}
			Any very thin subset of $\omega$ is uniformly thin.
		\end{theorem}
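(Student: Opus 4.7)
The plan is to use the structural characterization from Proposition~\ref{Prop1}: write $B = \bigcup_{k \in \omega} \mathcal{A}_k$ with $|\mathcal{A}_k| \leq \mathcal{M}$ and gaps $g_k := \min(\mathcal{A}_{k+1}) - \max(\mathcal{A}_k) \to \infty$. The key observation is that for any fixed window length $k$, once the gaps eventually exceed $k$, a window of length $k$ can meet at most one of the clusters $\mathcal{A}_i$, and therefore contains at most $\mathcal{M}$ elements of $B$.

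Concretely, I would fix $k \geq 1$, choose $K$ with $g_i > k$ for all $i \geq K$, and observe that for $h \geq \max(\mathcal{A}_K)$ the window $[h+1, h+k]$ cannot contain points from two distinct clusters $\mathcal{A}_i, \mathcal{A}_j$ with $i, j \geq K$; otherwise some pair of consecutive clusters would be separated by a gap of at most $k-1$, contradicting $g_i > k$. Hence $A(h+1, h+k) \leq \mathcal{M}$ for all sufficiently large $h$, so $\limsup_{h \to \infty} A(h+1, h+k) \leq \mathcal{M}$. Dividing by $k$ and sending $k \to \infty$ yields
\[
\overline{u}(B) \;\leq\; \lim_{k \to \infty} \frac{\mathcal{M}}{k} \;=\; 0,
\]
which gives $u(B) = 0$, i.e.\ $B$ is uniformly thin.

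As a sanity check, the same conclusion can be reached by first applying Theorem~\ref{thverythin} to decompose $B$ as a finite union of super thin sets. Each super thin set $\{n_1<n_2<\cdots\}$ satisfies $n_{i+1}-n_i\to\infty$, so any window of length $k$ eventually catches at most one element of it; finite subadditivity of $\overline{u}$ then yields $\overline{u}(B)=0$ as before.

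The main (mild) obstacle is just bookkeeping around the gap estimate: one must carefully rule out that a window of length $k$ can straddle the transition between consecutive clusters $\mathcal{A}_i$ and $\mathcal{A}_{i+1}$ when $g_i>k$. Once the uniform counting bound $A(h+1,h+k)\leq\mathcal{M}$ is in place for large $h$, the definition of $\overline{u}$ delivers the theorem immediately, with no further quantitative input needed.
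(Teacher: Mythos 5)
Your proof is correct, and it is genuinely simpler than the paper's.  Both arguments start from the characterization in Proposition~\ref{Prop1} (clusters $\mathcal{A}_k$ of bounded size with gaps $g_k\to\infty$), but then diverge.  The paper rearranges the gap sequence $(n_k)$ in increasing order $n_{k_1}\leqslant n_{k_2}\leqslant\cdots$, uses that the partial sums $s_l=n_{k_1}+\cdots+n_{k_l}$ minimize any sum of $l$ distinct gaps, shows that a window of length $s_l+1$ meets at most $l$ consecutive clusters (hence at most $Ml$ points of $A$), and then sends $l\to\infty$ using $r/(n_{k_1}+\cdots+n_{k_r})\to 0$ to conclude $u(A)=0$ along the subsequence of window lengths $s_l+1$; this implicitly relies on the cited existence of the limit over $k$ to pass to a subsequence.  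Your argument avoids the rearrangement entirely: for each \emph{fixed} window length $k$ you pick $K$ with $g_i>k$ for $i\geqslant K$, note that once $h\geqslant\max(\mathcal{A}_K)$ a window $[h+1,h+k]$ can meet at most one cluster (if it met $\mathcal{A}_i$ and $\mathcal{A}_j$ with $K\leqslant i<j$ one would get $g_i\leqslant k-1$), so $\limsup_h A(h+1,h+k)\leqslant\mathcal{M}$, and hence $\overline{u}(B)\leqslant\lim_k\mathcal{M}/k=0$ directly from the definition, with no subsequence extraction needed.  Your sanity-check alternative (decompose into super thin pieces via Theorem~\ref{thverythin} and invoke finite subadditivity of $\overline{u}$) is also valid and is an equally legitimate third route.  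In short, same structural input, but a cleaner quantitative bound: $\limsup_h A(h+1,h+k)\leqslant\mathcal{M}$ uniformly in $k$, which is stronger and shorter than what the paper establishes.
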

		\begin{proof}
			Suppose $A$ be a very thin subset of $\omega$ such that $A=\displaystyle{\bigcup_{k\in\omega}}A_k$ where $1\leqslant |A_k|\leqslant M $ for all $k\in\omega$ for some $M\in\omega$, $\min(A_{k+1}) - \max(A_k) >0$ for all $k\in \omega$ and $\displaystyle{\lim_{k \to \infty}}(\min(A_{k+1}) - \max(A_k))=\infty$. Let $n_k=\min(A_{k+1}) - \max(A_k)$, $k\in \omega$.\newline
			Let $k_1$ = least element of $\omega$ such that $n_{k_1}\leqslant n_k$ for all $k\in \omega$ and let\newline 
			$k_{r+1}$ = least element of $\omega-\{k_1,...,k_r\}$ such that $n_{k_{r+1}}\leqslant n_k$ for all $k\in \omega-\{k_1,...,k_r\}$, $r>1$.\newline Then $\displaystyle{\lim_{r\to \infty}n_{k_r}}=\infty$ and so $\displaystyle{\lim_{r\to \infty}\frac{r}{n_{k_1}+...+n_{k_r}}}=0$. \newline 
			By induction it can be shown that for any $l\geqslant 1$ and for any  $l$ distinct elements $m_1,...,m_l$ of $\omega$,
			\begin{center}
				$n_{k_1}+...+n_{k_l}\leqslant n_{m_1}+...+n_{m_l}$.
			\end{center}
			Let $s_l=n_{k_1}+...+n_{k_l}$. Then $\{n\in A:t+1\leqslant n\leqslant t+s_l+1\}$ can intersect at most $l$ consecutive $A_k$, $k\in \omega$. Therefore
			\begin{center}
			 $\displaystyle{\max_{t\geqslant 0 }|\{n\in A:t+1\leqslant n\leqslant t+s_l+1\}|}\leqslant Ml$
			 \end{center}
		  and so $u(A)=0$.
		\end{proof}
		\begin{example}\label{ex6}
			Let $a_1=1$ and let $a_p=a_{p-1}+2(1^3+...+(p-1)^3)+1,p\geqslant 2$. \newline 
			Define $A_p=\{a_p,a_p+1^3,a_p+1^3+2^3,...,a_p+1^3+2^3+...+p^3\}$, $p\geqslant 1$. Let 
			$A=\displaystyle{\bigcup_{p=1}^{\infty}}A_p$. Then $A$ is not very thin.\newline
		Let $b_n=1^3+2^3+...+n^3$ and $s_m=|\{r\in A:m+1\leqslant r\leqslant m+b_n+1\}|$, $n\geqslant 1$, $m\geqslant 0$.\newline
			 Then
			\begin{center}
			 $s_m\leqslant (n+1)$ if $a_n\leqslant m$.
			 \end{center}
			Now $s_m\leqslant |A_1|+...+|A_n|\leqslant \frac{(n+1)(n+2)}{2}$ if $0\leqslant m<a_n$.\newline 
			Hence $A$ is uniformly thin.
		\end{example}
	\begin{figure}[!h]
		\centering
		\includegraphics[width=0.6\linewidth]{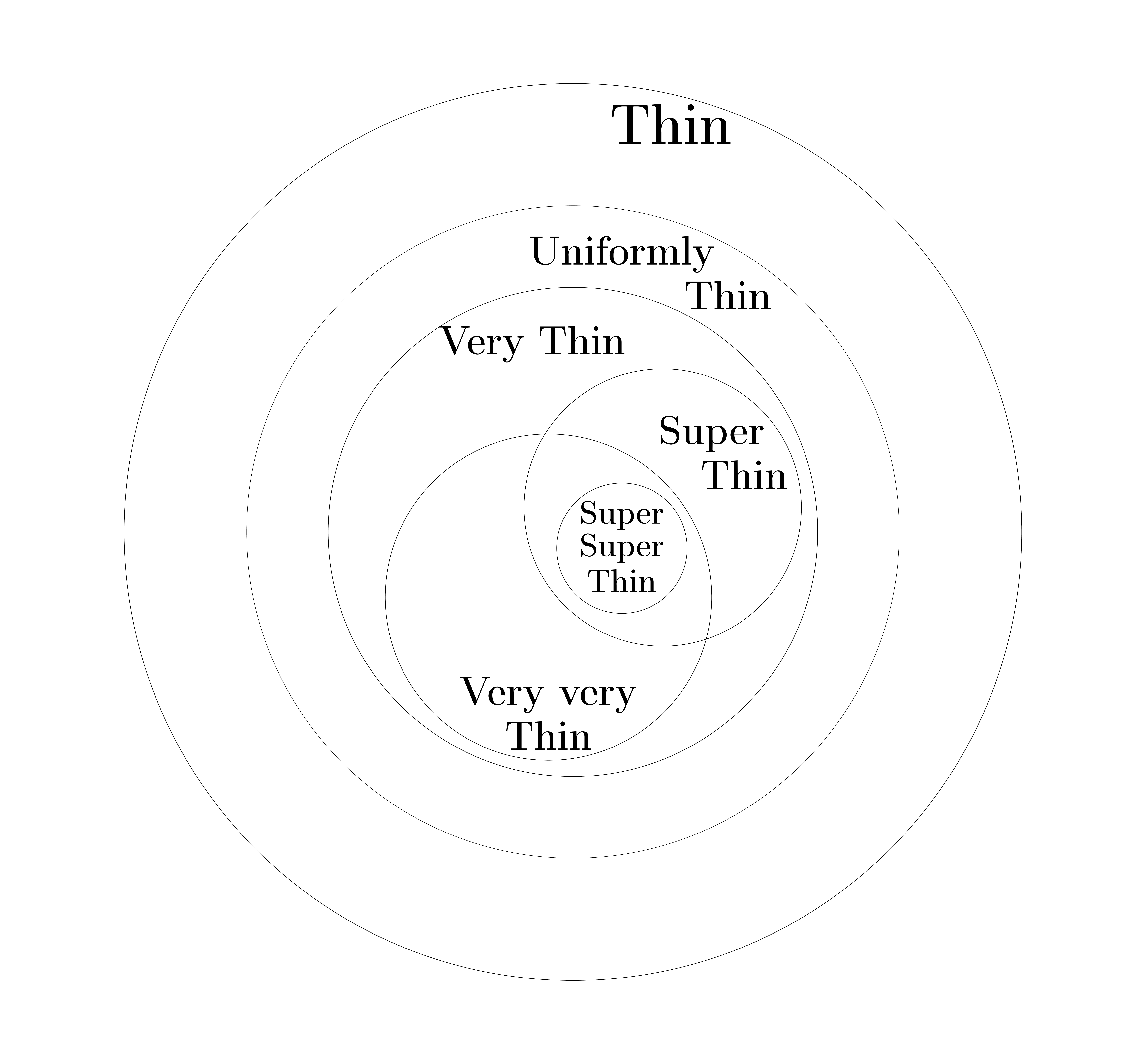}
		\caption{}
		\label{fig:figure333m}
	\end{figure}

		Example {\pageref{ex6}} shows that uniformly thin set mat not be very thin. A diagram is given in Figure~\ref{fig:figure333m} on the basis of all examples given in Section \ref{sec1} and Section \ref{sec2}.
		\section{Fin-BW and BW properties}
		 First, we recall some properties and propositions related to ideal and ideal convergence given in \textbf{\cite{u1,u5}}:\newline
		An ideal $\mathcal{I}$ on $\omega$ has the Fin-BW property if for any bounded sequence $(x_n)_{n\in \omega}$ of real numbers there is $A\notin \mathcal{I}$ such that $(x_n)_{n\in A}$ is convergent and the BW property if for any bounded sequence $(x_n)_{n\in \omega}$ of real numbers there is $A\notin \mathcal{I}$ such that $(x_n)_{n\in A}$ is $\mathcal{I}$-convergent. If an ideal $\mathcal{I}$ has Fin-BW property then it also has BW property.
		
		Recall that $2^\omega,2^{<\omega}$ and $2^n$ denote the set of all infinite sequences of zeros and ones, the set of all finite sequences of zeros and ones and sequences of zeros and ones of length $n$ respectively. If $s\in 2^n$, then $s\string^i$ denotes the sequence of length $n+1$ which extends $s$ by $i$ for $i\in \omega$. If $x\in 2^{\omega}$ then $x\upharpoonright n=(x(0),x(1),...,x(n-1))$ for $n\in \omega$.
		\begin{proposition}[\textbf{\cite{u1}}]\label{prop1}
			An ideal has the BW property (the Fin-BW property) if and only if for every family of sets $\{A_s:s\in 2^{<\omega}
			\}$ satisfying the following conditions \newline
			$(S1) A_\emptyset=\omega,\newline
			(S2) A_s=A_{s\string^0}	\cup A_{s\string^1},\newline
			(S3) A_{s\string^0}	\cap A_{s\string^1}=\emptyset$,\newline
			there exist $x\in {2^\omega}$ and $B\subset \omega$, $B\notin I$ such that $B\backslash A_{x\upharpoonright n}\in I$ ($B\backslash A_{x\upharpoonright n}$ is finite respectively) for all $n$.
		\end{proposition}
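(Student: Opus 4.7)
I plan to prove the BW and Fin-BW equivalences in parallel, since the two arguments differ only by replacing ``belongs to $\mathcal{I}$'' with ``is finite''. A preliminary observation underlies everything: any splitting family $\{A_s:s\in 2^{<\omega}\}$ satisfying (S1)--(S3) assigns to each $k\in\omega$ a canonical address $y_k\in 2^\omega$, characterised by $k\in A_{y_k\upharpoonright n}$ for every $n$; existence and uniqueness of $y_k$ follow from (S1)--(S3) by a short induction on $n$.

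For the forward direction, I would embed $2^\omega$ into the unit interval via the Cantor-type map $f(y)=\sum_{n\geq 0} 2y(n)/3^{n+1}$, which is injective and has closed image $K\subset[0,1]$. Setting $x_k=f(y_k)$ produces a bounded real sequence, so the BW (respectively Fin-BW) property furnishes $B\notin\mathcal{I}$ and $\alpha\in\mathbb{R}$ such that $(x_k)_{k\in B}$ is $\mathcal{I}$-convergent (respectively convergent) to $\alpha$. Because $K$ is closed, $\alpha=f(z)$ for a unique $z\in 2^\omega$; take $x=z$. The quantitative input is that whenever $y_k\upharpoonright n\neq z\upharpoonright n$, the ternary spacing inside $K$ forces $|x_k-\alpha|\geq 1/3^n$, whence
\begin{equation*}
B\setminus A_{z\upharpoonright n}=\{k\in B:y_k\upharpoonright n\neq z\upharpoonright n\}\subseteq\{k\in B:|x_k-\alpha|\geq 1/3^n\},
\end{equation*}
and the right-hand side lies in $\mathcal{I}$ (respectively is finite) by the chosen mode of convergence.

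For the converse, given a bounded sequence $(x_n)\subset[c,d]$, I build a tree by repeated bisection: set $A_\emptyset=\omega$, and once $A_s$ has been paired with a half-open interval $I_s$ of length $(d-c)/2^{|s|}$, let $A_{s\string^0}$ and $A_{s\string^1}$ be the preimages under $k\mapsto x_k$ of the two half-open halves of $I_s$. The half-open convention is what makes (S2) and (S3) exact set equalities rather than holding only up to endpoints. Applying the hypothesis supplies $x\in 2^\omega$ and $B\notin\mathcal{I}$ with $B\setminus A_{x\upharpoonright n}\in\mathcal{I}$ (respectively finite); the nested closures of $I_{x\upharpoonright n}$ shrink to a single point $\alpha$, and the inclusion $\{k\in B:|x_k-\alpha|>(d-c)/2^n\}\subseteq B\setminus A_{x\upharpoonright n}$ reads off $\mathcal{I}$-convergence (respectively convergence) of $(x_n)_{n\in B}$ to $\alpha$.

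The main technical friction is ensuring, in the BW case of the forward direction, that the $\mathcal{I}$-limit $\alpha$ really lies in $K$ so that the address $z$ is available. For Fin-BW this is automatic since $K$ is closed. For BW I would argue that because $B\notin\mathcal{I}$, for every $\epsilon>0$ the set $\{k\in B:|x_k-\alpha|<\epsilon\}$ is nonempty (indeed not in $\mathcal{I}$), so $\alpha$ is an accumulation point of the values in $K$, hence $\alpha\in\overline{K}=K$. The remaining routine matter is the endpoint convention for the bisection in the converse, handled as described above.
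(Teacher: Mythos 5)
The paper merely cites this proposition from Barbarski--Filipów--Mrożek--Szuca and does not reproduce a proof, so there is no in-paper argument to compare against; your proposal must therefore be judged on its own. It is correct and complete: the canonical-address map $k\mapsto y_k$, the injective Cantor embedding $f(y)=\sum_n 2y(n)/3^{n+1}$ with the separation estimate $|f(y)-f(z)|\geq 1/3^{m+1}$ at the first disagreement index $m$, and the identification $B\setminus A_{z\upharpoonright n}=\{k\in B: y_k\upharpoonright n\neq z\upharpoonright n\}$ (valid because the level-$n$ sets $A_s$ are pairwise disjoint, by an easy induction from (S2)--(S3)) together give the forward implication cleanly; the dyadic half-open bisection tree gives the converse, and the inclusion $\{k\in B:|x_k-\alpha|>(d-c)/2^n\}\subseteq B\setminus A_{x\upharpoonright n}$ finishes it. Your side remark about why the $\mathcal{I}$-limit $\alpha$ lands in the closed Cantor set $K$ in the BW case is exactly the point one must address, and your argument (every $\epsilon$-neighbourhood of $\alpha$ meets $\{x_k:k\in B\}$ since $B\notin\mathcal{I}$) handles it. This is the standard route for results of this kind, and, as far as I can tell, essentially the argument in the cited reference.
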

	$\mathcal{I}_d$ = ideal of all thin subsets of $\omega$ and $\mathcal{I}_u$ = ideal of all uniformly thin subsets of $\omega$ do not satisfy BW and so FinBW property ( see Example 4 in \textbf{\cite{u3}} and Corollary 1 in \textbf{\cite{u1}}). Now the following example shows that $\mathcal{I}_v$ = ideal of very thin subsets of $\omega$ does not satisfy Fin-BW property.

		\begin{example}
			Define a family of sets $\{A_s:s\in 2^{<\omega}\}$ as follows:
			\begin{center}
				$A_\emptyset=\omega$,
			\end{center}
		\begin{figure}[!ht]
			\centering
			\includegraphics[width=1\linewidth]{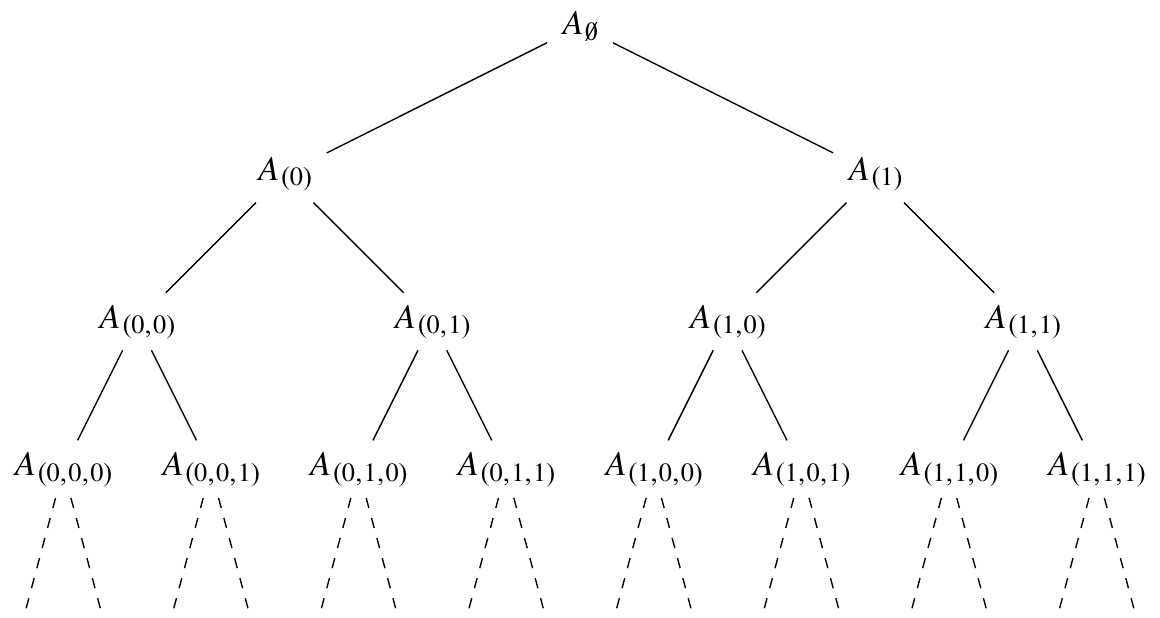}
			\caption{}
			\label{fig:figure111}
		\end{figure}
		
			\begin{center}
				$A_{(0)}=2\omega$, $A_{(1)}=2\omega-1$,
			\end{center}
			\begin{center}
				$A_{(0,0)}=2^2\omega$, $A_{(0,1)}=2^2\omega-2$, $A_{(1,0)}=2^2\omega-1$ ,$A_{(1,1)}=2^2\omega-3$,
			\end{center}
			\begin{center}
				$A_{(0,0,0)}=2^3\omega$, $A_{(0,0,1)}=2^3\omega-4$, $A_{(0,1,0)}=2^3\omega-2$, $A_{(0,1,1)}=2^3\omega-6$, \newline $A_{(1,0,0)}=2^3\omega-1$, $A_{(1,0,1)}=2^3\omega-5$, $A_{(1,1,0)}=2^3\omega-3$, $A_{(1,1,1)}=2^3\omega-7$
			\end{center}
			and so on.\newline
			So if $s\in 2^n$ then $A_s=2^n\omega-i$, $0\leqslant i\leqslant 2^n-1$ and
			\begin{center}
				$A_{s\string^0}=2^n(2\omega)-i$, $A_{s\string^1}=2^n(2\omega-1)-i$.
			\end{center}
			Let $x\in 2^{\omega}$. Then $\displaystyle{\bigcap_{n\in \omega}}A_{x\upharpoonright n}$=$\emptyset$ and $\{A_{x\upharpoonright n}\backslash A_{x\upharpoonright {n+1}}:n\in \omega\}$ is a collection of mutually disjoint sets so that numbers in  each $A_{x\upharpoonright n}\backslash A_{x\upharpoonright {n+1}}$ are in arithmetic progressions.\newline
			Let $N=\{n_0<n_1<n_2<n_3<...\}$ be an infinite subset of $\omega$ and let $a_{n_k}$ be the $n_k^{th}$ element in the arithmetic progression formed by elements of $A_{x\upharpoonright k}\backslash A_{x\upharpoonright {k+1}}$, $k\geqslant 0$.\newline
			Let $Ar_N=\displaystyle{\bigcup_{k\geqslant 0}}$\{first $n_k$ numbers in the arithmetic progression of elements of $A_{x\upharpoonright k}\backslash A_{x\upharpoonright {k+1}}$\}. Then $Ar_N$ is super thin.\newline
			Suppose $B\subset \omega$ such that $B\backslash A_{x\upharpoonright n}$ is finite for all $n$. Since $\displaystyle{\bigcap_{n\in \omega}}A_{x\upharpoonright n}$=$\emptyset$, $B\subset Ar_N$ for some infinite subset $N$ of $\omega$ and so $B$ is super thin.
		\end{example}
		\begin{theorem}
			\label{thbw}
			$\mathcal{I}_v$ = ideal comprising of very thin subsets of $\omega$ satisfies the BW property.
		\end{theorem}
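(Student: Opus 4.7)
The plan is to apply Proposition~\ref{prop1} and reduce the theorem to producing, for every family $\{A_s : s \in 2^{<\omega}\}$ satisfying (S1)--(S3), an $x \in 2^{\omega}$ and a set $B \notin \mathcal{I}_v$ with $B \setminus A_{x\upharpoonright n} \in \mathcal{I}_v$ for every $n$. First I would construct the branch $x$ by induction: since $A_\emptyset = \omega$ is not very thin and $\mathcal{I}_v$ is closed under finite unions, the disjoint partition $A_s = A_{s\string^0} \cup A_{s\string^1}$ must contain at least one child outside $\mathcal{I}_v$, so we can pick $x(n)$ so that $A_{x\upharpoonright n+1} \notin \mathcal{I}_v$ for every $n$.

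The engine of the construction is a combinatorial characterization of non-very-thin sets that I would prove as a lemma: for an infinite $A = \{a_1 < a_2 < \dots\} \subset \omega$, the condition $A \notin \mathcal{I}_v$ is equivalent to the assertion that for every $M \in \omega$ there exists $N \in \omega$ with $a_{j+M-1} - a_j \leq N$ for infinitely many $j$. The forward direction is the contrapositive of a pigeonhole: if $a_{j+M-1} - a_j \to \infty$, partition the index set into residue classes modulo $M-1$ to exhibit $A$ as a union of $M-1$ super thin pieces, so $A$ is very thin by Theorem~\ref{thverythin}. The reverse direction is immediate from Proposition~\ref{Prop1}: an $(\mathcal{M}+1)$-cluster of diameter $\leq N$ must straddle at least two consecutive blocks $\mathcal{A}_k$, producing an inter-block gap bounded by $N$ and contradicting (V3).

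With this lemma at hand I would build $B$ by placing increasingly large clusters deep inside the branch. Fix any $N_M \to \infty$ (say $N_M = M$). For each $M$, applying the lemma to the non-very-thin set $A_{x\upharpoonright N_M}$ yields a constant $D_M$ and infinitely many blocks of $M$ consecutive elements of $A_{x\upharpoonright N_M}$ of diameter $\leq D_M$. Enumerate the pairs $(M,j) \in \omega \times \omega$ and, inductively, pick for each such pair a block $K_{M,j} \subset A_{x\upharpoonright N_M}$ of $M$ consecutive elements of diameter $\leq D_M$ located strictly to the right of every previously chosen block. Set $B = \bigcup_{M,j} K_{M,j}$.

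Verification uses both directions of the lemma. To see $B \notin \mathcal{I}_v$, for each $M$ the family $\{K_{M,j}\}_{j \geq 1}$ supplies infinitely many $M$-element runs of consecutive elements of $B$ with diameter $\leq D_M$, and the reverse direction of the lemma applies. To see $B \setminus A_{x\upharpoonright n} \in \mathcal{I}_v$, observe that $K_{M,j} \subset A_{x\upharpoonright N_M} \subset A_{x\upharpoonright n}$ whenever $N_M \geq n$, so only finitely many values of $M$ contribute to $B \setminus A_{x\upharpoonright n}$; for each such $M$, the set $\bigcup_{j} K_{M,j}$ satisfies (V1)--(V3) with $\mathcal{M} = M$ (the blocks themselves have size $\leq M$, and the inter-block gaps diverge by construction), hence is very thin, and a finite union of very thin sets is very thin. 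The main obstacle is isolating the combinatorial characterization of non-very-thin sets; once that lemma is in place, the branch construction and the diagonal placement of clusters proceed by straightforward induction.
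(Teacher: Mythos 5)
Your proof is correct and takes a genuinely different, and arguably cleaner, route than the paper. The paper's argument splits into two cases. In Case~1 it assumes there is a single $x$ and a single gap bound $M$ such that $(A_{x\upharpoonright n})_M$ is infinite at every depth, and then simply pulls arbitrarily long $M$-tight clusters from deeper and deeper levels. Case~2, where for any fixed gap bound the clusters eventually die out along every branch, is handled by an intricate tree construction: one chooses a sequence of pairwise disjoint sets $A_{s_i}$ whose elements interlace, assembles increasingly long clusters $B_n$ that straddle several $A_{s_i}$'s at once, and only afterwards selects $x$ so that the portion of $B$ surviving in $A_{x\upharpoonright n}$ stays non--very-thin. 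Your approach dissolves this case distinction by first proving the quantitative characterization lemma: an infinite $A$ is non--very-thin iff for every $M$ there is an $N$ with $a_{j+M-1}-a_j\leq N$ for infinitely many $j$ (forward direction via the residue-class split into $M-1$ super thin pieces and Theorem~\ref{thverythin}, reverse direction via Proposition~\ref{Prop1} and the block-straddling argument). Because $\mathcal{I}_v$ is a proper ideal, one can build $x$ greedily so that every $A_{x\upharpoonright n}\notin\mathcal{I}_v$; the lemma then supplies, for each $M$, infinitely many $M$-element clusters of diameter $\leq D_M$ inside $A_{x\upharpoonright M}$, and interleaving these along the branch gives $B$. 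What your approach buys is that the depth-dependent tightness $D_M$ is allowed to degrade with $M$ from the start, so the dichotomy between ``one $M$ works at all depths'' and ``no single $M$ works'' never has to be made.

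One small point to make explicit: when you write that ``the inter-block gaps diverge by construction,'' the only constraint you actually impose when choosing $K_{M,j}$ is that it lie strictly to the right of all previously chosen blocks, which does not by itself force divergence. You should add that at step $t$ the block is chosen so that the gap from the previous block is at least $t$ (possible since the lemma gives infinitely many candidate clusters). With that clause, for each fixed $M$ the set $\bigcup_j K_{M,j}$ genuinely satisfies (V1)--(V3) with $\mathcal{M}=M$, and the final verification $B\setminus A_{x\upharpoonright n}\in\mathcal{I}_v$ goes through as you describe.
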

		\begin{proof}
			Let $A$ be a subset of $\omega$ and $M\in \omega$. Define\newline
					$(A)_M$=$\{1\}$$\cup$$\{n\in \omega$: $n>1$ and there exist $n$ consecutive elements of $A$ such that difference between any two consecutive among them is less than or equal to $M$$\}$.
		Then $A$ is very thin implies $(A)_M$ is finite for all $M\in \omega$.\newline
			Let $\{A_s:s\in 2^{<\omega}\}$ be a family of sets satisfying the three conditions in Proposition~\ref{prop1}. This theorem can be proved in the following cases.\newline
			\newline
			$\textbf{Case 1:}$ Suppose there exist  $x\in 2^{\omega}$  and $M\in \omega$ such that $(A_{x\upharpoonright n})_M$ is infinite for all $n\in \omega$.\newline
			Let $B_0$= $\{1\}$ and define $B_{n+1}$ to be the $(n+1)$ consecutive elements of $A_{x\upharpoonright{ n+1}}$ such that difference between each two consecutive among them is $\leqslant M$ so that $\max(B_n)<\min(B_{n+1})$, $n\in \omega$.\newline
			Let $B=\displaystyle{\bigcup_{n\in \omega}}B_n$. Then $B$ is not very thin and $B\backslash A_{x\upharpoonright n}\subset\displaystyle{\bigcup_{i=0}^{n-1}}B_i$ is finite for $n>0$.\newline
			\newline
			$\textbf{Case 2:}$ Suppose for any $x\in 2^{\omega}$  and $M\in \omega$ if $A_{x\upharpoonright {n+1}}$ is not very thin for all $n\in \omega$ then there exists a $n_M\in \omega$ such that $(A_{x\upharpoonright {n_M}})_M$ is finite.\newline
			 So there exist $k_0\in \omega$ and $s_0\in 2^{k_0}$ such that $A_{s_0}$ is not very thin and $(A_{s_0})_1$ is finite and let $M_0$=$\max(A_{s_0})_1$.\newline
			As $\max(A_{s_0})_1$=$M_0$, due to the hypothesis
			there exist a $k_1\in \omega$ with $k_0<k_1$ and $s_1\in 2^{k_1}$ such that\newline 
			(i) $A_{s_0}$ and $A_{s_1}$ are disjoint and 
			$(A_{s_1})_{M_0+1}$ is finite,\newline
			(ii) there is an infinite subset $\mathcal{A}_1$ of $A_{s_0}$ such that if $p\in \mathcal{A}_1$ then $p+i\in A_{s_1}$ for some $i$, $1\leqslant i\leqslant M_0$ and if $p<q<p+i$ then $q\in A_{s_0}$,\newline(iii) $\max(A_{s_0}\cup A_{s_1})_1\leqslant (M_0+1)M_1+M_0$ = $N_1$ where $M_1$= $\max(A_{s_1})_{M_0+1}$.\newline
			Let $B_1=\{a_1^0,a_1^1\}$ where $a_1^i\in A_{s_i}$, $i\in \{0,1\}$ and $(a_1^1-a_1^0)\leqslant M_0$.\newline\newline
			Continuing in this way we will get an infinite subset $\{k_0<k_1<k_2<...\}$ of $\omega$, a sequence $(s_i)_{i\in \omega}$ such that $s_i\in 2^{k_i}$, mutually disjoint sets $\{A_{s_i}:i\in \omega\}$, a collection of infinite sets $\{A_{s_0}=\mathcal{A}_{0}\supset \mathcal{A}_{1}\supset \mathcal{A}_{2}\supset \mathcal{A}_{3}\supset ...\}$, a collection of finite sets $\{B_i:i\geqslant 1\}$, a sequence $(M_i)_{i\in \omega}$ and an infinite subset $\{M_0=N_0<N_1<N_2<...\}$ of $\omega$ such that for $n\in \omega$ \newline
			(i) $M_{n+1}$= $\max(A_{s_{n+1}})_{N_n+1}$,\newline \newline
			(ii) $\mathcal{A}_{n+1}$ infinite subset of $\mathcal{A}_{n}$ such that if $p\in \mathcal{A}_{n+1}$ then $p+i_0+i_1+...+i_k\in A_{s_{k+1}}$ for some $i_k$ with $k+1\leqslant i_0+i_1+...+i_k\leqslant N_k$, $0\leqslant k\leqslant n$ so that if $p<q<p+i_0$ then $q\in A_{s_0}$ and if $p+i_0+i_1+...+i_{k-1}< q< p+i_0+i_1+...+i_k$ then $q\in A_{s_0}\cup A_{s_1}\cup ...\cup A_{s_k}$ where $1\leqslant k\leqslant n$,\newline \newline
			(iii)  $\max(A_{s_0}\cup A_{s_1}\cup ...\cup A_{s_{n+1}})_1\leqslant (N_n+1)M_{n+1}+N_n$ $=N_{n+1}$ and\newline \newline
			(iv) $B_{n+1}=\{a_{n+1}^0,a_{n+1}^1,...,a_{n+1}^{n+1}\}$ where $a_{n+1}^i\in A_{s_i}$ and $(a_{n+1}^{i+1}-a_{n+1}^i)\leqslant N_i$ for $0\leqslant i\leqslant n$ and for $n\geqslant 1$, $a_{n+1}^0-a_{n}^n>2^{n}$.
			\begin{figure}[h!]
				\centering
				\includegraphics[width=0.86\linewidth]{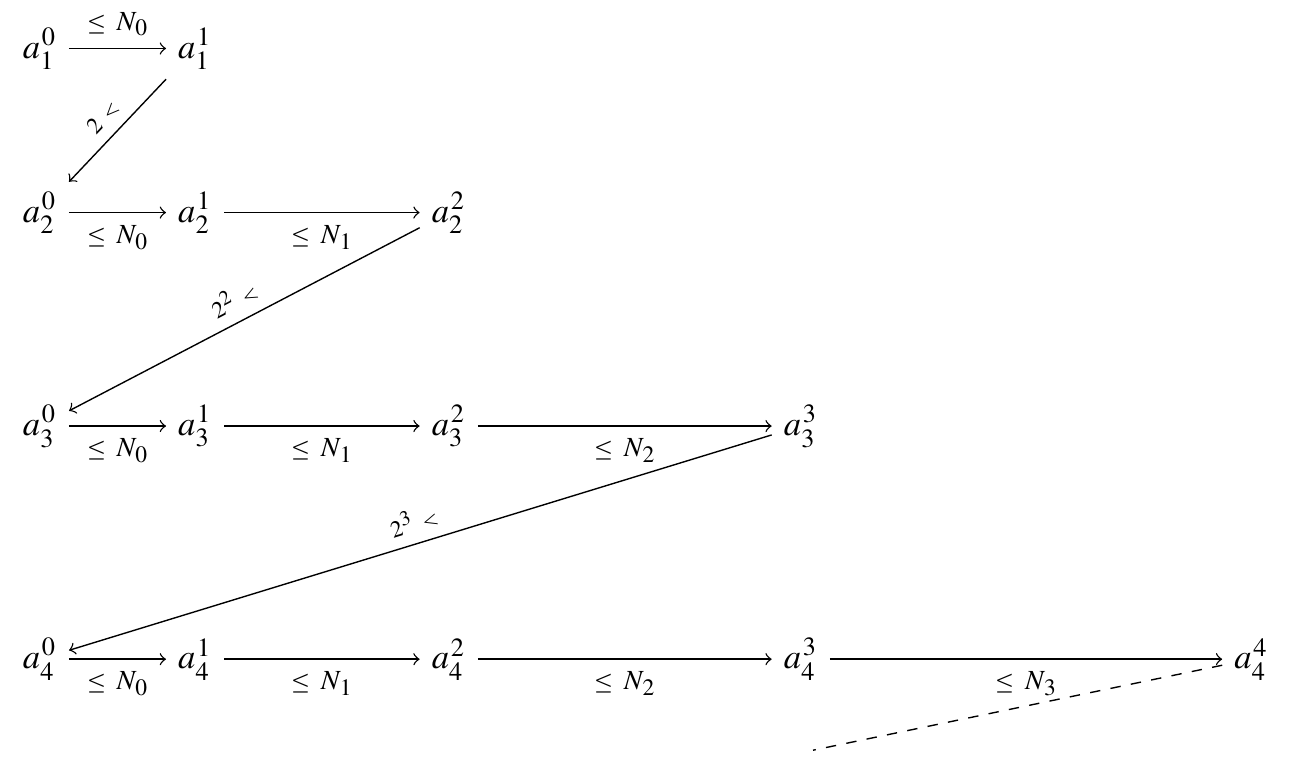}
				\caption{}
				\label{fig:figure222}
			\end{figure}\newline
			Let $B=\displaystyle{\bigcup_{n=1}^{\infty}}B_n$. Then $B$ is not very thin and $B\cap A_{s_i}$ is super thin for all $i\in \omega$. Moreover if $M$ is an infinite subset of $\{s_i:i\in \omega\}$ then $B_M=\displaystyle{\bigcup_{s\in M}}B\cap A_s$ is not very thin (see Figure~\ref{fig:figure222}).\newline
			Define $x\in 2^{\omega}$ such that there are infinitely many $i\in \omega$ so that $A_{s_i}\subset A_{x\upharpoonright n}$, $n\in \omega$.
			If there exists a $m\in \omega$ such that there is no $i\in \omega$ so that $A_{s_i}\subset A_{x\upharpoonright n}\backslash A_{x\upharpoonright {n+1}}$ for all $n\geqslant m$ then we simply take $M=\{s_i:A_{s_i}\subset A_{x\upharpoonright {m+1}}\}$. If there does not exist such $m$, construct $M$ by taking least $s_i$ such that  $A_{s_i}\subset A_{x\upharpoonright n}\backslash A_{x\upharpoonright {n+1}}$ (if such $s_i$ exists) for $n\in \omega$. In both cases $M$ is infinite and so $B_M$ is not very thin with $B_M\backslash A_{x\upharpoonright n}$ is very thin for all $n$.	
		\end{proof}

		\begin{remark}
			Let 
			\begin{center}
				$r_n=0+1+...+n$, $n\in \omega$,\\ $p_n=N_0+N_1+...+N_{r_n}$, $n\in \omega$ and\\$q_0=N_0$ and $q_n=q_{n-1}+p_n$, $n\geqslant 1$
			\end{center}
			Define a set $D=\displaystyle{\bigcup_{i=0}^{\infty}}D_{i}$ as follows:
			\begin{center}
				$D_0$=$\displaystyle{\bigcup_{i=1}^{q_0}}B_{i}$
			\end{center}
		and for $n\geqslant 1$,
		  \begin{center}
		  	$D_n$=$\displaystyle{\bigcup_{i=q_{n-1}+1}^{q_n}}B_{i}$$\bigg \backslash$ $\displaystyle{\bigcup_{i=0}^{n-1}}A_{s_i}$
		  \end{center}
			Then $D$ is not very very thin and also $D\cap A_{s_i}$ is finite for all $i\in \omega$. Similarly one can construct a non very very thin set $D_M$ such that $D\cap A_{s}$ is finite for all $s\in M$ and $D_M\subset B_M$ where $M$ is an infinite subset of $\{s_i:i\in \omega\}$.\newline
   Replacing $B_M$ by $D_M$ and defining same $x\in 2^{\omega}$ as in the last case of Theorem ~\ref{thbw} , it can be shown that $\mathcal{I}_{vv}$ = ideal of very very thin subsets of $\omega$ has the Fin-BW property. \newline\newline
		\end{remark}

		\vfill\eject
		
	\end{document}